



\documentclass{elsarticle}




\usepackage{amssymb}

\usepackage{amssymb,amsthm} 
\usepackage{amscd,amssymb,graphics}
\usepackage[hidelinks]{hyperref}
\usepackage{amsfonts}
\usepackage{amsmath}
\usepackage{amsxtra}
\usepackage{latexsym}
\usepackage[mathcal]{eucal}
\usepackage{enumerate}
\usepackage{ifsym}
\usepackage{yfonts}
\usepackage{calc}
\input xy
\xyoption{all}
\usepackage{MnSymbol}




\newtheorem{thm}{Theorem}[section]
\newtheorem*{thm*}{Theorem}
\newtheorem{corol}[thm]{Corollary}
\newtheorem{claim}[thm]{Claim}
\newtheorem{lemma}[thm]{Lemma}
\newtheorem{prop}[thm]{Proposition}

\theoremstyle{definition}
\newtheorem{defi}[thm]{Definition}
\theoremstyle{remark}

\newtheorem{example}[thm]{Example}
\newtheorem{question}[thm]{Question}

\numberwithin{equation}{section}

\def \Z {{\Bbb Z}}

\def \seq {\subseteq}

\def \copt {\tau_{co}}
\def \eps {\varepsilon}



\def\N {{\mathbb N}}
\def\Z {{\mathbb Z}}

\def\Aut{{\mathrm {Aut}}}
\def\St{{\mathrm {St}}\,}
\def\H{{\mathrm{H}}\,}

\def\co{{\,{:}\,}}

%





\usepackage{lipsum}
\makeatletter
\def\ps@pprintTitle{%
 \let\@oddhead\@empty
 \let\@evenhead\@empty
 \def\@oddfoot{}%
 \let\@evenfoot\@oddfoot}
\makeatother

\begin{document}

\begin{frontmatter}



\title{Order and minimality of some topological groups \tnoteref{1}}

\author{Michael Megrelishvili}
\address{Department of Mathematics,
	Bar-Ilan University, 52900 Ramat-Gan, Israel}
\ead{megereli@math.biu.ac.il}

\author{Luie Polev}
\address{Department of Mathematics,
	Bar-Ilan University, 52900 Ramat-Gan, Israel}
\ead{luiepolev@gmail.com}

\begin{abstract}
A Hausdorff topological group is called minimal if it does not admit a strictly coarser Hausdorff group topology.
   This paper mostly deals with the topological group $\H_+(X)$ of order-preserving homeomorphisms of a compact 
   linearly ordered connected space $X$.
    We provide a sufficient condition on $X$ under which the topological group $\H_+(X)$ is minimal. This condition is satisfied, for example, by:
   the unit interval, the ordered square, the extended long line and the circle (endowed with its cyclic order).
   In fact, these groups are even $a$-minimal, meaning, in this setting, that the compact-open
   topology on $G$ is the smallest Hausdorff group topology on $G$. 
   One of the key ideas is to verify that
   for such $X$ the Zariski and the Markov topologies on the group $\H_+(X)$ coincide
   with the compact-open topology. 
   The technique in this article is mainly based on a work of Gartside and Glyn \cite{Gartside}.
\end{abstract}

\begin{keyword}
$a$-minimal group,
Markov's topology, minimal groups, compact LOTS, order-preserving homeomorphisms, Zariski's topology.

\MSC[2010] 57S05, 54F05, 54H15.

\tnotetext[1]{This research was supported by a grant of Israel Science Foundation (ISF 668/13).}

\end{keyword}

\end{frontmatter}



\section{Introduction}

A Hausdorff topological group $G$ is \textit{minimal} (\cite{Doitch}, \cite{Steph}) if it does not admit a strictly coarser Hausdorff group topology or, equivalently, if every injective continuous group homomorphism $G \to P$ into a Hausdorff topological group is a topological group embedding.

All topological spaces are assumed to be Hausdorff and completely regular 
 (unless stated otherwise).
Let $X$ be a compact topological space. Denote by $\H(X)$ the group of all homeomorphisms of $X$, endowed with the compact-open topology $\tau_{co}$. In this setting $\H(X)$ is a topological group and the natural action $\H(X)\times X \to X$  is continuous.

Clearly, every compact topological group is minimal.
The groups $\mathbb R$ and $\mathbb Z$, on the other hand, are not minimal. Moreover, Stephenson showed in \cite{Steph} that an LCA group is minimal if and only if it is compact.
 Nontrivial examples of minimal groups include $\mathbb{Q} / \mathbb{Z}$ 
 with the quotient topology, 
 \cite{Steph}, and $S(X)$, the symmetric group of an infinite set (with the pointwise topology). The minimality of the latter was proved by Gaughan \cite{Gaughan} 
 and (independently) by Dierolf and Schwanengel \cite{DS}. 
For more information on minimal groups we refer to the surveys \cite{CHR}, \cite{Dikr},
\cite{MegDik} and the book \cite{DPS}.


\vskip 0.2cm

The following is a question of Stoyanov (cited in \cite{Arhan}, for example): 
\begin{question} \label{q:St} (Stoyanov)
	Is it true that for every compact homogeneous space $X$ the topological group $\H(X)$ is minimal?
\end{question}
One important positive example of such a space is the Cantor cube $2^{\omega}$. Indeed, in \cite{Gam} Gamarnik proved that $\H(2^{\omega})$ is minimal. Recently van Mill (\cite{vanMill2012}) provided a counterexample to Question \ref{q:St}
 proving that for the $n$-dimensional Menger universal continuum $X$, where $n>0$, the group $\H(X)$ is not minimal.

It is well known that the Hilbert cube $[0,1]^{\omega}$ is a homogeneous compact space as well. The following question of Uspenskij \cite{Usp2008} remains unanswered:
is the group $\H ([0,1]^{\omega})$ minimal?

\begin{defi} \ \label{d:a}
\begin{enumerate}
	\item \cite{MegDik}
	A topological group $G$ is \textit{$a$-minimal} if its topology is the smallest possible Hausdorff group topology on $G$.
\item \cite{MegDik} A compact space $X$ is \textit{$M$-compact} (\textit{$aM$-compact}) if the topological group $\H(X)$ is minimal (respectively, $a$-minimal).
\item A compact ordered space $X$ is \textit{$M_+$-compact} (\textit{$aM_+$-compact}) if the topological group $\H_+(X)$ of all order-preserving homeomorphisms of $X$ is minimal (respectively, $a$-minimal).
\end{enumerate}
\end{defi}

Several questions naturally arise at this point:

\begin{question} \label{q:M} \
\begin{enumerate}
\item \cite{MegDik} Which (notable) compact spaces are $M$-compact? $aM$-compact?
\item Which compact ordered spaces are $M_+$-compact? $aM_+$-compact?
\end{enumerate}
\end{question}

The two point compactification of $\Z$ is a compact LOTS $X$ such that $\H_+(X)$ 
and $\H(X)$ are not minimal (Example \ref{notM_+}). Thus not every compact LOTS is $M_+$-compact or $M$-compact.

Clearly, every $a$-minimal group is minimal.
It is well known that $(\Z, \tau_p)$ with its $p$-adic topology is a minimal topological group.
Since such topologies are incomparable for different $p$'s,
it follows that $(\Z, \tau_p)$ is not $a$-minimal.

 Recall a few results:
	\begin{enumerate} 
\item  (Gaughan \cite{Gaughan}) The symmetric group $S(X)$ is $a$-minimal. Since $\H(X^*)$ is precisely $S(X)$ we obtain that the $1$-point compactification $X^*$ of a discrete set $X$ is $aM$-compact. 	
\item (Banakh-Guran-Protasov \cite{BGP})
Every subgroup of $S(X)$ that contains
$S_{\omega}(X)$ (permutations of finite support) is $a$-minimal (answers a question of Dikranjan \cite{Lukach}).
\item (Gamarnik \cite{Gam}) $[0,1]^n$ is $M$-compact 
(for $n \in \N$) 
if and only if $n=1$.
\item (Gartside and Glyn \cite{Gartside})
$[0,1]$ and $\mathbb{S}^1$ are $aM$-compact.
\item (Gamarnik \cite{Gam}) The Cantor cube $2^{\omega}$ is $M$-compact.
\item (Uspenskij \cite{Usp2001}) Every $h$-homogeneous compact space is $M$-compact.
\item (van Mill \cite{vanMill2012}) 
$n$-dimensional Menger universal continuum $X$, where $n>0$, is not $M$-compact (answers Stoyanov's Question \ref{q:St}).
\end{enumerate}

Recall that
a zero-dimensional compact space $X$ is \emph{$h$-homogeneous} if all non-empty clopen subsets of $X$ are homeomorphic to $X$. In particular, $2^{\omega}$ is $h$-homogeneous. Hence, (6) is a generalization of (5).

The concept of an $a$-minimal group
is in fact an intrinsic algebraic property of an abstract group $G$ (underlying a given topological group).
$a$-minimality is interesting for several reasons. For instance, it is strongly related to some fundamental topics like Markov's and Zariski's topologies.

 For additional information about $a$-minimality (and minimality) see the recent survey \cite{MegDik}. For Markov's and Zariski's topologies see \cite{DikSh}, \cite{DikSh10}, \cite{BGP}, \cite{DikTol}. We recall 
 the definitions.

\begin{defi} \label{d:ZarMar}   
Let $G$ be a group.
\begin{enumerate}
	\item  The \textit{Zariski topology} $\mathfrak{Z}_G$ is generated by the sub-base consisting of the sets $\{x\in G: x^{\eps_1}g_1x^{\eps_2}g_2\cdots x^{\eps_n}g_n\neq e\}$, where $e$ is the unit element of $G$, $n\in \mathbb N$, $g_1,...,g_n\in G$, and $\eps_1,...,\eps_n\in \{-1,1\}$.
\item The \textit{Markov topology} $\mathfrak{M}_G$ is the infimum (taken in the lattice of all topologies on $G$) of all \emph{Hausdorff} group topologies on $G$. 
\end{enumerate}
\end{defi}

Note that $(G,\mathfrak{Z}_G)$ and $(G,\mathfrak{M}_G)$ are quasi-topological groups. That is the inverse and the translations are continuous. They are not necessarily topological groups. In fact, if $G$ is abelian then $\mathfrak Z_G$ and $\mathfrak M_G$ are not group topologies, unless  
$G$ is finite, \cite[Corollary 3.6]{DikSh10}.
Here we give some simple properties. Regarding assertion (3) in the following lemma see for example \cite[Defition 2.1]{MegDik}.

\begin{lemma} \label{l:ZarMark}
	Let $G$ be an abstract group. Suppose that $\tau$ is a Hausdorff group topology on $G$. Then
	\begin{enumerate}[(1)]
		\item $\mathfrak Z_G \seq \mathfrak M_G \seq \tau$.
		\item  $\mathfrak Z_G = \mathfrak M_G = \tau$  if and only if $\tau \seq \mathfrak Z_G$. 
		In this case $(G,\tau)$ is $a$-minimal.
		\item $\mathfrak M_G$ is a (not necessarily, Hausdorff) group topology if and only if $(G,\mathfrak M_G)$ is an $a$-minimal topological group.
	\end{enumerate}
\end{lemma}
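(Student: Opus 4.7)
My plan is to handle the three assertions in order, doing the real work in (1) and the forward direction of (3) while (2) falls out as bookkeeping.

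For (1), the inclusion $\mathfrak{M}_G \subseteq \tau$ is immediate from the definition of $\mathfrak{M}_G$ as an infimum in the lattice of topologies: $\tau$ is one of the Hausdorff group topologies over which that infimum is taken, and the infimum equals the intersection of open-set families. To prove $\mathfrak{Z}_G \subseteq \mathfrak{M}_G$ I would show that each Zariski sub-basic set is open in every Hausdorff group topology. Fix $n$, $g_1, \dots, g_n \in G$, and $\eps_1, \dots, \eps_n \in \{-1,1\}$, and let $W : G \to G$ be the word map $W(x) = x^{\eps_1} g_1 \cdots x^{\eps_n} g_n$. In any Hausdorff group topology $\sigma$ the map $W$ is continuous and $\{e\}$ is $\sigma$-closed, so $W^{-1}(G \setminus \{e\})$ is $\sigma$-open. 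Since this holds for every such $\sigma$, the set belongs to $\mathfrak{M}_G$.

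For (2) the equivalence falls out of (1): if $\tau \subseteq \mathfrak{Z}_G$, the chain $\tau \subseteq \mathfrak{Z}_G \subseteq \mathfrak{M}_G \subseteq \tau$ collapses to equality, while the converse is trivial. Once $\tau = \mathfrak{M}_G$, the topology $\tau$ is a Hausdorff group topology equal to the infimum of all such topologies, so it is in fact the minimum one and $(G,\tau)$ is $a$-minimal by definition.

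Part (3) is the substantive one; the direction $(\Leftarrow)$ is tautological. For $(\Rightarrow)$ the key observation, and the main (though modest) obstacle, is that $\mathfrak{Z}_G$ is already $T_1$: given any $g \neq h$ in $G$, the single-letter word $W(x) = x g^{-1}$ produces the Zariski sub-basic open set $\{x \in G : x g^{-1} \neq e\}$ containing $h$ but not $g$, so $\{g\}$ is $\mathfrak{Z}_G$-closed. Hence the finer topology $\mathfrak{M}_G$ is $T_1$ as well. If one now assumes $\mathfrak{M}_G$ is a group topology, then the standard fact that a $T_1$ topological group is Hausdorff (apply continuity of $(x,y) \mapsto x y^{-1}$ to see that the diagonal is the preimage of the closed set $\{e\}$) forces $\mathfrak{M}_G$ to be Hausdorff. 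Thus $\mathfrak{M}_G$ is a Hausdorff group topology contained in every Hausdorff group topology on $G$, i.e.\ the minimum one, so $(G,\mathfrak{M}_G)$ is $a$-minimal.
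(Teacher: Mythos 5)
Your proof is correct and follows essentially the same route as the paper's: part (1) by continuity of word maps in any Hausdorff group topology plus the infimum-as-intersection description of $\mathfrak M_G$, part (2) by collapsing the chain from (1), and part (3) by observing that $\mathfrak M_G$ is $T_1$ (the paper asserts this directly; you derive it from $\mathfrak Z_G$ being $T_1$, which is a valid and slightly more explicit justification) so that a group topology refining it is Hausdorff and hence the minimum Hausdorff group topology. No gaps.
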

\begin{proof}
\begin{enumerate}[(1)]
    \item Follows directly from the definitions. 
    \item Follows from (1).
    \item Note that $\mathfrak M_G$ is always a $T_1$-topology. Hence, if $\mathfrak M_G$ is a topological group topology then it is Hausdorff. Taking into account the definition of $\mathfrak M_G$ we can conclude that this topology is the smallest Hausdorff group topology on $G$. Hence, $(G,\mathfrak M_G)$ is $a$-minimal.
\end{enumerate}
\end{proof}

\begin{question} \label{q:Markov} [Markov]
For what groups $G$ the Markov and Zariski topologies coincide?
\end{question}

A review of some old and new partial answers can be found in \cite{DikTol}. 
Below, in Theorem \ref{t:new}, we give additional examples of groups
for which $\mathfrak{Z}_G=\mathfrak M_G$.

In the present paper we mainly deal with the groups $\H_+(X)$.
Given an ordered compact space $X$, we are interested in the group $\H_+(X)$ of order-preserving homeomorphisms.
For a compact space $X$ the group $\H(X)$ is complete (with respect to the  two-sided uniformity) and therefore $\H_+(X)$ is also complete (as a closed subgroup of a complete group).

In certain cases the minimality of $\H(X)$ can be deduced from the minimality of $\H_+(X)$, as the following  lemma shows.

\begin{lemma} \label{link}
Let $X$ be a compact LOTS such that $\H_+(X)$ is minimal. If $\H_+(X)$ is a co-compact subgroup of $\H(X)$, then $\H(X)$ is minimal.
\end{lemma}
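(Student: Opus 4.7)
The plan is to take an arbitrary Hausdorff group topology $\tau'$ on $G := \H(X)$ with $\tau' \subseteq \tau_{co}$ and to deduce that $\tau' = \tau_{co}$. Throughout, write $H := \H_+(X)$, which is closed in the (two-sided) complete group $(G,\tau_{co})$.

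First I would restrict $\tau'$ to $H$: since $\tau'|_H$ is a Hausdorff group topology coarser than $\tau_{co}|_H$, the minimality of $\H_+(X)$ forces $\tau'|_H = \tau_{co}|_H$. Because $H$ is closed in the complete group $(G,\tau_{co})$, it is complete in $\tau_{co}|_H$, hence also in $\tau'|_H$; therefore $H$ remains closed in $(G,\tau')$ as well.

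The co-compactness hypothesis now enters. The coset space $(G/H,\tau_{co}/H)$ is compact Hausdorff, and $(G/H,\tau'/H)$ is Hausdorff because $H$ is closed in $(G,\tau')$. The identity descends to a continuous surjection $(G/H,\tau_{co}/H) \to (G/H,\tau'/H)$, exhibiting the latter as compact Hausdorff; since a continuous bijection between compact Hausdorff spaces is a homeomorphism, the two quotient topologies on $G/H$ coincide.

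The main obstacle is the last step: deducing $\tau' = \tau_{co}$ on all of $G$ from the coincidence of the two topologies on $H$ and on $G/H$. In general this calls for a three-space-style argument for the closed subgroup $H$ of $G$. However, in every setting where the lemma will be applied, namely for a compact connected LOTS $X$, each self-homeomorphism of $X$ either preserves or reverses the order, so $\H_+(X)$ has index at most two in $\H(X)$. The closed subgroup $H$ is then automatically open in $(G,\tau')$ (a closed subgroup of finite index is open), and since $\tau'|_H = \tau_{co}|_H$ the two topologies agree on the finite clopen partition of $G$ into cosets of $H$, yielding $\tau' = \tau_{co}$ at once.
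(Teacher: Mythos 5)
Your first three paragraphs reproduce exactly the paper's argument (the paper factors the statement through Lemma \ref{general_lemma}): minimality of $H=\H_+(X)$ gives $\tau'|_H=\tau_{co}|_H$, completeness of the closed subgroup $H$ of the complete group $(\H(X),\tau_{co})$ keeps $H$ closed in $(G,\tau')$, and the compact-Hausdorff comparison gives $\tau'/H=\tau_{co}/H$. Up to that point you are on track.

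The last step, however, is a genuine gap. The lemma is stated for an arbitrary compact LOTS under the sole hypothesis that $\H_+(X)$ is \emph{co-compact} in $\H(X)$; it does not assume finite index. Your closing argument only handles the case where $H$ is open (finite index), which you justify by appealing to connectedness of $X$ --- an assumption that is not in the statement and is precisely what reduces ``co-compact'' to ``index at most $2$''. So what you have proved is a special case, not the lemma. The missing ingredient is Merson's Lemma (Lemma \ref{merson} in the paper): if $\gamma_1\seq\gamma$ are group topologies on $G$ with $\gamma_1|_H=\gamma|_H$ and $\gamma_1/H=\gamma/H$ for some (not necessarily closed) subgroup $H$, then $\gamma_1=\gamma$. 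This is not the three-space problem for minimality (which indeed fails in general); it concerns two \emph{comparable} topologies agreeing on a subgroup and on the coset space, and it holds unconditionally. Inserting Merson's Lemma after your third paragraph closes the argument in the stated generality and is exactly how the paper concludes.
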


This lemma is a corollary of Lemma \ref{general_lemma}. 
\emph{Co-compactness} 
of $\H_+(X)$ in $\H(X)$ means that the coset space $\H(X) / \H_+(X)$ is compact.

If $X$ is a linearly ordered continuum, then by Lemma \ref{l:2} the subgroup $\H_+(X)$ has at most index $2$
in $\H(X)$. 
So, in this case, from the minimality of $\H_+(X)$ we can deduce by Lemma \ref{link} the minimality of $\H(X)$.
For example, it is true for $X=[0,1]$.
Note that $\H[0,1]=\H_+[0,1] \leftthreetimes \mathbb Z_2$, the topological semidirect product of $\H_+[0,1]$ and $\mathbb Z_2$, where $\Z_2$ is the two element group. However, in general, it is unclear how to infer the minimality of a topological group $G$ from the minimality of $G \leftthreetimes \mathbb Z_2$. For instance, in \cite[Example $4.7$]{MegDik} it is shown that there exists a non-minimal group $G$ such that $G\leftthreetimes \mathbb Z_2$ is minimal.

\vskip 0.3cm
Recall the following result of Gartside and Glyn:

\begin{thm}\textnormal{\cite{Gartside}} \label{GG}
	For any metric one dimensional manifold (with or without boundary) $M$, the
	compact-open topology on the full homeomorphism group $\H(M)$ is the unique minimum 
	Hausdorff group topology on $\H(M)$.
\end{thm}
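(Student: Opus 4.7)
The plan is to prove $\tau_{co} = \mathfrak{Z}_G$ on $G = \H(M)$, because by Lemma \ref{l:ZarMark}(2) this equality forces $\mathfrak{M}_G = \tau_{co}$, which is exactly the $a$-minimality asserted by the theorem. The inclusion $\mathfrak{Z}_G \subseteq \tau_{co}$ is automatic from Lemma \ref{l:ZarMark}(1), so the real task is to find, around the identity, a base of Zariski-open sets inside the compact-open subbasic neighbourhoods $U(x,J) = \{h \in \H(M) : h(x) \in J\}$, with $x \in M$ and $J$ an open arc containing $x$.

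First I would reduce to the four connected models. A metric one-dimensional manifold with or without boundary is a countable topological sum of components, each homeomorphic to one of $\mathbb{R}$, $[0,1)$, $[0,1]$, or $S^1$, and $\H(M)$ decomposes as a semidirect product of the component-wise stabilizer (a direct product over components) with the group permuting components within each homeomorphism class. The permutation factor is $a$-minimal by the Gaughan-type result cited as item (1) in the list preceding the theorem, while the two compact models $[0,1]$ and $S^1$ are $aM$-compact by item (4) of that list. Hence the remaining content is (a) the non-compact connected models $\mathbb{R}$ and $[0,1)$, and (b) the coherent gluing of local Zariski witnesses across possibly infinitely many components.

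For (a), the one-dimensional linear or cyclic order on the model lets one build ``bump'' homeomorphisms $g_1, g_2 \in \H(M)$ with prescribed disjoint supports on small arcs flanking the target arc $J$, so that a short commutator word such as $w(h) = [h g_1 h^{-1},\, g_2]$ is non-identity precisely when $h$ drags the support of $g_1$ into that of $g_2$, i.e.\ when $h(x) \notin J$. Iterating over a finite cover of any compact $K$ by such arcs produces the desired Zariski-open neighbourhood inside $U(K,V)$. An alternative route is to embed $\H(\mathbb{R})$ and $\H([0,1))$ as point stabilizers inside $\H(S^1)$ and $\H([0,1])$ respectively, via the one-point compactifications, and pull the Zariski-open witnesses back from these already $aM$-compact ambient groups, checking that the defining sub-basic inequalities of $\mathfrak{Z}$ remain meaningful when restricted to a subgroup.

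The hardest step, I expect, concerns (b): a Zariski-basic open set involves only finitely many parameters $g_i$, and hence can see only finitely many components of $M$, whereas the compact-open constraint on $h$ is global. Here one would exploit the compactness of $K$ (so that $U(K,V)$ constrains $h$ on only finitely many components) and then splice the local commutator witnesses with the Gaughan-style argument for the component-permutation subgroup in order to control the behaviour of $h$ at infinity. Balancing this finitary nature of Zariski words against the potentially infinite component structure is the main obstacle to watch out for.
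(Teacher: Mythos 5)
First, a point of reference: the paper does not prove Theorem \ref{GG} at all --- it is quoted from Gartside--Glyn \cite{Gartside} --- so the only in-paper proof to compare with is that of Theorem \ref{t:new}, which recovers (and generalizes to ordered continua satisfying condition (A)) the two compact connected models $[0,1]$ and $\mathbb{S}^1$. For those models your outline is exactly the paper's argument: show $\copt\subseteq\mathfrak Z_G$ and invoke Lemma \ref{l:ZarMark}; your commutator sets are the sets $T(a,b)$ of Equation (\ref{T}). Two local corrections, though. The word $[hg_1h^{-1},g_2]$ is \emph{not} non-identity ``precisely when'' $h$ carries the support of $g_1$ onto the support of $g_2$: disjointness of supports forces commutation, but overlap does not force non-commutation, and the paper is careful to use only the one implication (Claim \ref{Cl:T}) together with $e\in T(a,b)$ (Claim \ref{TabOpen}). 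Moreover your ``i.e.\ when $h(x)\notin J$'' is inverted: with both witnesses supported in the \emph{same} small interval, membership in $T(a,b)$ only forces $g(a,b)\cap(a,b)\neq\emptyset$, and no single $T(a,b)$ lands inside $U(x,J)$; it is the finite intersection over an $\eps_1$-connected chain (Lemma \ref{l:connected}), combined with monotonicity of homeomorphisms of an ordered continuum (Lemma \ref{l:2}), that pins $h$ down.

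The genuine gap is in the part of Theorem \ref{GG} that goes beyond Theorem \ref{t:new}: the non-compact connected models $\mathbb R$ and $[0,1)$ and, above all, manifolds with infinitely many components. For a countable disjoint sum of circles, say, $\H(M)$ contains $\H(\mathbb{S}^1)\wr S(\omega)$, and the needed inclusion of the compact-open topology in $\mathfrak Z_G$ does not follow formally from $a$-minimality of the factors: the paper itself warns that minimality behaves badly under semidirect products (even with $\Z_2$), and Lemma \ref{general_lemma} is unavailable because the base subgroup is neither compact nor co-compact here. Since your plan is to exhibit Zariski words on all of $G$, you do not need the decomposition as a reduction, but you must actually produce words whose non-vanishing simultaneously controls which component $h$ sends a given component to (a Gaughan/Banakh--Guran--Protasov style argument with transposition-like constants) and controls $h$ within that component; ``splice'' names the entire content of the hard case without supplying it. The alternative route via stabilizers also needs justification you do not give: the Zariski topology of a subgroup $H\leq G$ is in general \emph{coarser} than the restriction of $\mathfrak Z_G$ (the constants of the words must lie in $H$), so witnesses cannot simply be ``pulled back'' unless their constants $p,q$ are chosen inside the stabilizer, and one must also check that the compact-open topology of $\H(\mathbb R)$ coincides with the topology induced from $\H(\mathbb{S}^1)$ on $\St(\infty)$.
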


The one dimensional compact manifolds, up to homeomorphism, are the closed interval $[0,1]$ and the circle $\mathbb{S}^1$. In view of Definition \ref{d:a} this result implies the following. 

\begin{thm} \label{GGcor} \textnormal{\cite{Gartside}}
	$\H[0,1]$ and $\H(\mathbb{S}^1)$ are $a$-minimal groups.
\end{thm}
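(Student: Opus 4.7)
The plan is to read off Theorem \ref{GGcor} directly from Theorem \ref{GG} by unwinding the definition of $a$-minimality. First I would invoke the classification of compact connected $1$-dimensional manifolds: up to homeomorphism the only such spaces are the closed interval $[0,1]$ (manifold with boundary) and the circle $\mathbb{S}^1$ (manifold without boundary). Both are metrizable, so Theorem \ref{GG} applies to each.

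Next I would apply Theorem \ref{GG} to $M \in \{[0,1], \mathbb{S}^1\}$: it asserts that the compact-open topology $\tau_{co}$ on the abstract group $\H(M)$ is the unique minimum Hausdorff group topology on $\H(M)$. In particular, for every Hausdorff group topology $\tau'$ on the underlying abstract group $\H(M)$ one has $\tau_{co} \subseteq \tau'$.

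By Definition \ref{d:a}(1), a topological group $(G,\tau)$ is $a$-minimal exactly when $\tau$ is the smallest Hausdorff group topology on $G$. Specializing $G = \H(M)$ and $\tau = \tau_{co}$ yields that $\H[0,1]$ and $\H(\mathbb{S}^1)$, equipped with their natural compact-open topologies, are $a$-minimal.

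There is no genuine obstacle in this corollary; the substance lies entirely in Theorem \ref{GG}, while the step from \ref{GG} to \ref{GGcor} is a matter of matching terminology. The only thing one should verify is that the phrase ``unique minimum Hausdorff group topology'' used in Theorem \ref{GG} means precisely ``smallest Hausdorff group topology'' in the sense of Definition \ref{d:a}(1), and indeed both formulations express that $\tau_{co}$ is contained in every Hausdorff group topology on the abstract group $\H(M)$.
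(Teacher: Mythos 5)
Your proposal is correct and matches the paper's own (one-line) justification: the paper likewise observes that the compact one-dimensional manifolds are exactly $[0,1]$ and $\mathbb{S}^1$, applies Theorem \ref{GG}, and identifies ``unique minimum Hausdorff group topology'' with the notion of $a$-minimality from Definition \ref{d:a}(1).
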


Extending some ideas of Gartside-Glyn \cite{Gartside} to linearly ordered spaces 
we give some new results about minimality of the groups $\H_+(X)$ of order preserving homeomorphisms. 

\begin{thm*}[see Theorem \ref{t:new}]
	Let $(X,\tau_{\leq})$ be a compact connected LOTS that satisfies the following condition:
	\begin{enumerate}[(A)]
	\item
	for every pair of elements $a<b$ in $X$ the group $\H_+[a,b]$ is nontrivial.
	\end{enumerate}
	Then:
	\begin{enumerate}[(1)] 
		\item For the topological group  $G=\H_+(X)$ and $G=\H(X)$ the Zariski and Markov topologies 
		coincide with the compact-open topology.
		That is, $\mathfrak{Z}_G=\mathfrak M_G=\copt$.
		\item The topological
		groups $\H_+(X)$ and $\H(X)$ are $a$-minimal.
		\item  $X$ is $aM_+$-compact and $aM$-compact.
	\end{enumerate}
\end{thm*}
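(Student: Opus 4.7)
My plan is to reduce all three conclusions to a single topological containment $\tau_{co}\subseteq\mathfrak{Z}_G$ for $G=\H_+(X)$, and then to realize that containment via a commutator construction in the spirit of Gartside--Glyn. Combined with the a priori chain $\mathfrak{Z}_G\subseteq\mathfrak{M}_G\subseteq\tau_{co}$ supplied by Lemma~\ref{l:ZarMark}(1) applied with $\tau=\tau_{co}$, such a containment forces $\mathfrak{Z}_G=\mathfrak{M}_G=\tau_{co}$, which is (1); (2) then follows from Lemma~\ref{l:ZarMark}(3), and (3) is just (2) rephrased via Definition~\ref{d:a}. To pass from $\H_+(X)$ to $\H(X)$ I invoke Lemma~\ref{link}, whose co-compactness hypothesis is provided by Lemma~\ref{l:2}: the index of $\H_+(X)$ in $\H(X)$ is at most two, hence it is co-compact.

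For $G=\H_+(X)$, a convenient sub-base of $\tau_{co}$ is given by the one-sided sets $V_<(b,c):=\{f:f(b)<c\}$ and $V_>(b,c):=\{f:f(b)>c\}$ with $b,c\in X$; by the symmetry obtained from reversing the order on $X$, it suffices to show that every $V_<(b,c)$ with $b<c$ is Zariski-open. So I fix $f_0\in V_<(b,c)$ and look for a (sub-basic or finite-intersection) Zariski-open neighborhood of $f_0$ inside $V_<(b,c)$.

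The main construction is the commutator trick. By condition~(A) the interval $[b,c]$ has interior points and $\H_+[a,c]$ is nontrivial for every $a\in(b,c)$; choose such an $a$ and a nontrivial $h\in\H_+[a,c]$, extended by the identity on $X\setminus[a,c]$. Consider the sub-basic Zariski-open set $V:=\{f\in G:[f,h]\ne e\}$. The inclusion $V\subseteq V_<(b,c)$ is a support calculation: if $f(b)\ge c$, then $a>b$ and order preservation give $f(a)>f(b)\ge c$, so the support $[f(a),f(c)]$ of $fhf^{-1}$ lies strictly above $c$ and is disjoint from $[a,c]=\mathrm{supp}(h)$; hence $fhf^{-1}$ and $h$ commute and $[f,h]=e$.

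Verifying $f_0\in V$, i.e.\ that $h$ can be chosen with $[f_0,h]\ne e$, is the main obstacle. In the generic case, where $f_0$ is not the identity on $[b,c]$, I pick $p\in(b,c)$ with $f_0(p)\ne p$, choose $a\in(b,p)$ and take $h$ supported in a small subinterval of $(a,c)$ around $p$ that is disjoint from its $f_0$-image; the supports of $h$ and $f_0hf_0^{-1}$ are then disjoint and $[f_0,h]\ne e$. The delicate case is when $f_0$ restricts to the identity on all of $[b,c]$ (most pathologically when $f_0=\id$): every $h$ of the above form commutes with $f_0$ and the single commutator fails. I expect to resolve this by passing to a more elaborate Zariski word -- for example, a commutator multiplied by a carefully chosen fixed element $g\in G$ that detects the behaviour of $f_0$ outside $[b,c]$ -- arranged so that $w(f_0)\ne e$ while the support-disjointness argument still forces $w(f)=e$ for every $f$ with $f(b)\ge c$. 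Making this last step uniform across all pathological $f_0$, in particular $f_0=\id$, is where I expect the heart of the technical difficulty to lie, and it is precisely the point where condition~(A) together with the full strength of the Gartside--Glyn method is most heavily used.
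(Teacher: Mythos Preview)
Your core containment $V\subseteq V_<(b,c)$ is false, and the error is not in the ``delicate case'' you flag but in the basic support computation. You argue that if $f(b)\ge c$ then $fhf^{-1}$ and $h$ have disjoint supports, ``hence $fhf^{-1}$ and $h$ commute and $[f,h]=e$''. But $[f,h]=e$ means $fhf^{-1}=h$, which is a much stronger statement than $fhf^{-1}$ commuting with $h$. In fact disjoint supports (with $h\ne e$) force $fhf^{-1}\ne h$, so $[f,h]\ne e$ and every such $f$ \emph{does} lie in your $V$. Thus the single commutator $\{f:[f,h]\ne e\}$ cannot be contained in $V_<(b,c)$. The paper's remedy is precisely to pass to a word in which ``disjoint supports'' really does force the word to be trivial: choose two \emph{non-commuting} $p,q\in\H_+(X)$ supported in $(a,b)$ (this is condition~(A) upgraded from ``nontrivial'' to ``nonabelian'' via Lemma~\ref{l:Hart}) and set
\[
T(a,b)=\{g\in G:[\,gpg^{-1},\,q\,]\ne e\}.
\]
Now if $g(a,b)\cap(a,b)=\emptyset$ then $gpg^{-1}$ and $q$ have disjoint supports, hence commute, hence $g\notin T(a,b)$; and $e\in T(a,b)$ because $p$ and $q$ do not commute. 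This single change also makes your ``delicate case'' $f_0=\id$ evaporate. From here the paper works with uniform neighbourhoods $O_\varepsilon$ of $e$ and $\varepsilon$-connected nets (Lemma~\ref{l:connected}) rather than your sub-base $V_<(b,c)$; your reduction would also succeed once $V$ is replaced by $T(b,c)$, after first translating by $f_0^{-1}$ (the Zariski topology is translation-invariant) to reduce to $f_0=e$.

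A secondary gap: Lemma~\ref{link} yields only \emph{minimality} of $\H(X)$, not $a$-minimality, and certainly not assertion~(1) for $G=\H(X)$. There is no reason an arbitrary Hausdorff group topology on $\H(X)$ must make the index-two subgroup $\H_+(X)$ closed, so you cannot run Merson's Lemma against it. The paper avoids this by carrying out the entire Zariski argument simultaneously for $G\in\{\H_+(X),\H(X)\}$, invoking Lemma~\ref{l:2} inside the proof to handle the possible order-reversing elements.
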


According to results of Hart and van Mill \cite{Hvan} (see Section \ref{s:rigid})
there exists a connected compact LOTS $X$ which is $\H_+$-rigid, that is, $\H_+(X)$ is trivial 
(in fact, $\H(X)$ is trivial).  
Hence, condition (A) of the theorem above is not always satisfied for general ordered continua.
Moreover, one may derive from results of \cite{Hvan} that there exists a connected compact LOTS $X$
for which $\H_+(X)=\H(X)=\Z,$ a discrete copy of the integers $\Z$, 
 and $\H[c,d]$ is trivial for some pair $c<d$ in $X$ (Proposition \ref{p:HartMill}).

In Section \ref{s:ex} we give some concrete examples of spaces that satisfy 
condition (A) of Theorem \ref{t:new}.
The following linearly ordered spaces $X$ are $aM_+$-compact, that is the groups $\H_+(X)$ 
are $a$-minimal:
\begin{enumerate}
	\item $[0,1]$;
    \item the lexicographically ordered square $\mathcal I^2$;
	\item the extended long line $\mathcal{L}^*$;
	\item the ordinal space $[0,\kappa]$;
	\item the unit circle $\mathbb{S}^1$
	(in this case we work with a \emph{cyclic order}, Definition \ref{d:cycord}).
\end{enumerate}

Note that the groups $\H_+(X)$ play a major role in many research lines.
See, for example, \cite{Ghys, Pestov2006, GM-tame}.

\vskip 0.3cm

\noindent {\bf Acknowledgments.} We thank 
R. Ben-Ari, D. Dikranjan, K.P. Hart, J. van Mill and M. Shlossberg for valuable suggestions.
We also thank the referee for constructive criticism and many improvements.

\section{Preliminaries}

In what follows, every compact topological space will be considered as a uniform space with respect to its natural (unique) uniformity.

For a topological group $(G,\gamma)$ and its subgroup $H$ denote by $\gamma / H$ the natural quotient topology on the coset space $G/H$.

\begin{lemma} \textnormal{(Merson's Lemma)} \label{merson}
	Let $(G, \gamma)$ be a not necessarily Hausdorff topological group and $H$ be a not necessarily closed subgroup of $G$. If $\gamma_1 \seq \gamma$ is a coarser group topology on $G$ such that $\gamma_1|_H=\gamma|_H$ and $\gamma_1/H=\gamma/H$, then $\gamma_1=\gamma$.
\end{lemma}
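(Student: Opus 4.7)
The plan is to prove $\gamma = \gamma_1$. Since $\gamma_1 \subseteq \gamma$ is given, I only need the reverse inclusion, and because both are group topologies, it suffices by translation invariance to show that every $\gamma$-neighborhood $U$ of the identity $e$ contains some $\gamma_1$-neighborhood of $e$.

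Fix such $U$ and pick a symmetric $\gamma$-open neighborhood $V$ of $e$ with $V \cdot V \subseteq U$. The two hypotheses now provide two complementary kinds of $\gamma_1$-open sets around $e$. First, the canonical projection $\pi \colon G \to G/H$ is automatically open for any topological group, so $\pi(V)$ is $\gamma/H$-open, hence $\gamma_1/H$-open by hypothesis, and therefore its preimage $VH = \pi^{-1}(\pi(V))$ is $\gamma_1$-open (by continuity of $\pi$ for $\gamma_1$). Second, the subspace hypothesis $\gamma|_H = \gamma_1|_H$ implies that $V \cap H$ is $\gamma_1$-open in $H$, so there exists $W_0 \in \gamma_1$ with $e \in W_0$ and $W_0 \cap H \subseteq V \cap H$.

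The intersection $W_0 \cap VH$ is then a $\gamma_1$-neighborhood of $e$ that sits inside $VH$ and meets $H$ inside $V \cap H$. Using continuity of multiplication in $(G,\gamma_1)$, I would further shrink to a symmetric $\gamma_1$-open neighborhood $W$ of $e$ with $W \cdot W \subseteq W_0 \cap VH$. The final task is to verify $W \subseteq U$: given $x \in W$, write $x = v h$ with $v \in V$ and $h \in H$, and try to conclude $h \in V \cap H$, so that $x \in V \cdot V \subseteq U$.

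The main obstacle, and the combinatorial heart of the argument, is precisely this last step. The decomposition $x = vh$ inside $VH$ is not unique: any two decompositions differ by an element of $H \cap V^{-1}V$, which provides the manoeuvring room. The delicate point is to exploit this flexibility to place $h$ inside $W_0 \cap H$ (and hence inside $V \cap H$ by construction of $W_0$); this is where the product relation $W \cdot W \subseteq W_0$, the $\gamma_1$-openness of $VH$, and the trace condition $W_0 \cap H \subseteq V \cap H$ must be used simultaneously, since neither hypothesis alone suffices. Once the inclusion $W \subseteq V \cdot (V \cap H) \subseteq V \cdot V \subseteq U$ is in place, translation yields $\gamma \subseteq \gamma_1$ and completes the proof of Merson's Lemma.
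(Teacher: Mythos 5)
The paper states Merson's Lemma without proof (it is quoted as a known result), so there is no in-paper argument to compare against; your proposal follows the standard line of attack, but it contains a genuine gap at exactly the point you yourself flag as ``the combinatorial heart.'' You never actually show that an $x\in W$ admits a decomposition $x=vh$ with $h\in V\cap H$, and with your choices this step does not go through: if $x=vh$ with $v\in V$ and $h\in H$, then $h=v^{-1}x\in V^{-1}W$, and since $V$ is only a $\gamma$-neighbourhood there is nothing forcing $h$ into the $\gamma_1$-neighbourhood $W_0$, so the trace condition $W_0\cap H\subseteq V\cap H$ cannot be brought to bear. The non-uniqueness of the decomposition does not rescue this: the set of admissible $H$-components of $x$ is $H\cap V^{-1}x$, and there is no reason it should meet $V$. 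Requiring $W\cdot W\subseteq W_0\cap VH$ constrains $x$, not the factor $v$, which is the object that needs to be small in the $\gamma_1$-sense.

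The missing idea is to saturate a smaller set. Choose the symmetric $\gamma_1$-neighbourhood $W$ of $e$ with $WW\subseteq W_0$ \emph{first}, and then saturate $V\cap W$ instead of $V$: since $W\in\gamma_1\subseteq\gamma$, the set $V\cap W$ is $\gamma$-open, so $\pi(V\cap W)$ is $\gamma/H$-open, hence $\gamma_1/H$-open, and $(V\cap W)H=\pi^{-1}(\pi(V\cap W))$ is $\gamma_1$-open. Now take $x\in W\cap (V\cap W)H$ and write $x=vh$ with $v\in V\cap W$ and $h\in H$; then $h=v^{-1}x\in W^{-1}W=WW\subseteq W_0$, so $h\in W_0\cap H\subseteq V\cap H$, whence $x=vh\in V(V\cap H)\subseteq VV\subseteq U$. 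Thus $W\cap(V\cap W)H$ is a $\gamma_1$-neighbourhood of $e$ contained in $U$, and translation gives $\gamma\subseteq\gamma_1$. The point is that the quotient hypothesis must be applied to a set already trimmed by a $\gamma_1$-neighbourhood, so that the $V$-factor of the decomposition is $\gamma_1$-small and the $H$-factor is therefore captured by $W_0$; this is the precise sense in which the two hypotheses are used ``simultaneously,'' and it is the step your write-up leaves open.
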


\begin{lemma}\label{general_lemma}
	Let $H$ be a co-compact complete subgroup of a topological group $G$. If $H$ is minimal then $G$ is minimal too.
\end{lemma}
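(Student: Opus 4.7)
The plan is to reduce to Merson's Lemma (Lemma \ref{merson}). Let $\gamma$ denote the given topology on $G$ and let $\gamma_1 \subseteq \gamma$ be an arbitrary coarser Hausdorff group topology on $G$. I want to show $\gamma_1 = \gamma$, which by Merson's Lemma follows once I verify the two hypotheses $\gamma_1|_H = \gamma|_H$ and $\gamma_1 / H = \gamma / H$.

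For the first equality, observe that $\gamma_1|_H$ is a group topology on $H$, is coarser than $\gamma|_H$, and is Hausdorff (being the restriction of the Hausdorff topology $\gamma_1$). Since $H$ is minimal in its inherited topology $\gamma|_H$, I conclude $\gamma_1|_H = \gamma|_H$.

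For the second equality, I first need to know that $H$ is $\gamma_1$-closed so that the coset space $(G/H, \gamma_1/H)$ is Hausdorff. This is exactly where completeness enters: since $\gamma_1|_H = \gamma|_H$ by the previous step, and $H$ is complete with respect to its two-sided uniformity coming from $\gamma$, it is complete with respect to the same uniformity viewed inside $(G,\gamma_1)$; a complete subgroup of a Hausdorff topological group is closed, so $H$ is $\gamma_1$-closed. Now the identity map $\id : (G,\gamma) \to (G,\gamma_1)$ is continuous and descends to a continuous bijection
\[
  \overline{\id} \co (G/H, \gamma/H) \to (G/H, \gamma_1/H).
\]
The domain is compact by the co-compactness hypothesis and the codomain is Hausdorff by the previous sentence, so $\overline{\id}$ is a homeomorphism. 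This gives $\gamma_1 / H = \gamma / H$.

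Merson's Lemma now yields $\gamma_1 = \gamma$, proving that $(G,\gamma)$ is minimal. The main subtlety is the Hausdorffness of the quotient $\gamma_1/H$; this is precisely why the hypothesis is completeness of $H$ (ensuring it remains closed in the possibly coarser topology $\gamma_1$) rather than mere closedness in $\gamma$, and it is the only place where one must be careful about topologies changing.
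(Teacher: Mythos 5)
Your proposal is correct and follows essentially the same route as the paper's proof: minimality of $H$ gives $\gamma_1|_H = \gamma|_H$, completeness gives $\gamma_1$-closedness of $H$ and hence Hausdorffness of $(G/H,\gamma_1/H)$, compactness of $(G/H,\gamma/H)$ forces $\gamma_1/H = \gamma/H$, and Merson's Lemma concludes. Your added remark that completeness transfers to $(G,\gamma_1)$ because the two subspace topologies (hence uniformities) on $H$ already coincide is a detail the paper leaves implicit, but the argument is identical in substance.
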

\begin{proof}
	Denote by $\tau$ the given topology on $G$, and let $\gamma \subseteq \tau$ be a coarser Hausdorff group topology. Since $H$ is minimal, we know that $\gamma |_H=\tau |_H$. Furthermore, $H$ is $\gamma$-closed in $G$ because $H$ is complete. Since $(G/H, \gamma/H)$ is Hausdorff  and  $(G/H, \tau/H)$ is compact we have $\gamma /H=\tau /H$. Thus, by Merson's Lemma \ref{merson}, we conclude that $\gamma=\tau$.
\end{proof}

\subsection{Ordered topological spaces}

 A \emph{linear order} on a set $X$ is, as usual, a binary relation $\leq$ which is reflexive, antisymmetric, transitive
and satisfies in addition the totality axiom: for all $a,b\in X$ either $a\leq b$ or $b\leq a$.

For a set $X$ equipped with a linear order $\leq$, the \textit{order topology} (or \textit{interval topology})  $\tau_{\leq}$ on $X$ is generated by the subbase that consists of the intervals $(\leftarrow, a)=\{x\in X: x<a \}$, \space $(b,\rightarrow)=\{x\in X: b<x \}$.
A \textit{linearly ordered topological space} (or LOTS) is a triple $(X,\tau_{\leq}, \leq)$ where $\leq$ is a linear order on $X$ and $\tau_{\leq}$ is the order topology on $X$.
For every pair $a<b$ in $X$ the definition of the intervals $(a,b), [a,b]$
is understood. Every linearly ordered compact space $X$ has the smallest and the greatest element; 
so, $X=[s,t]$ for some $s,t \in X$.

Sometimes we say: \emph{linearly ordered continuum}, instead of \emph{compact and connected LOTS}.

\begin{lemma} \label{l:2}
	Let $(X,\tau_{\leq})$ be a linearly ordered continuum.
	Then every $f\in \H(X)$ is either order-preserving or order-reversing. 
	In particular, the index of $\H_+(X)$ in $\H(X)$ is at most 2.
\end{lemma}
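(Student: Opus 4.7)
The plan is to show that the two endpoints of $X$ are topologically distinguishable from all other points, so that any homeomorphism must either fix both of them or swap them; the two cases will correspond exactly to order preservation and order reversal. Throughout I would write $X=[s,t]$, using that a compact LOTS has a minimum and a maximum.

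The first, and main, step is the topological characterization of the endpoints: a point $p\in X$ has $X\setminus\{p\}$ connected if and only if $p\in\{s,t\}$. For an interior $p$ the sets $[s,p)$ and $(p,t]$ are nonempty disjoint open subsets whose union is $X\setminus\{p\}$, giving a disconnection. For $p=s$ (the case $p=t$ being symmetric) I would write
\[
(s,t]=\bigcup_{s<a\leq t}[a,t]
\]
and observe that each closed subinterval $[a,t]$ is itself a compact connected LOTS, since density and Dedekind completeness of the order pass from $X$ to the subinterval and together characterize connectedness of a LOTS. All these connected pieces contain the common point $t$, so their union is connected. I expect this to be the main obstacle, as it is the only place where the specific structure of a linearly ordered continuum is genuinely used; everything afterwards is formal.

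Given the characterization, any $f\in\H(X)$ permutes $\{s,t\}$. Suppose first $f(s)=s$ and $f(t)=t$, and take $a<b$ in $X$; the boundary cases $a=s$ or $b=t$ reduce to injectivity of $f$, so assume $s<a<t$. Then $X\setminus\{a\}$ has exactly the two components $[s,a)$ and $(a,t]$, with $t$ in the second, and a homeomorphism sends components to components and preserves which component contains $t=f(t)$; hence $f\bigl((a,t]\bigr)=(f(a),t]$, and from $b\in(a,t]$ we conclude $f(a)<f(b)$. In the opposite case $f(s)=t$ and $f(t)=s$, the same argument shows that the component of $X\setminus\{a\}$ containing $t$ is mapped to the component of $X\setminus\{f(a)\}$ containing $f(t)=s$, namely $[s,f(a))$, so $f(b)<f(a)$ and $f$ reverses the order. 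The index bound follows because the assignment $f\mapsto\pm 1$ is a group homomorphism $\H(X)\to\Z_2$ with kernel $\H_+(X)$.
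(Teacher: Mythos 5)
Your proof is correct, but it takes a genuinely different route from the paper's. The paper argues by contradiction: if $f$ were neither order-preserving nor order-reversing one could find $x_1<x_2<x_3$ whose images form a ``peak'' or a ``valley'', and then the Intermediate Value Theorem on the continuum forces $f$ to repeat a value, contradicting injectivity. You instead prove a structural fact -- that $p\in X$ is an endpoint if and only if $X\setminus\{p\}$ is connected, and that for interior $a$ the punctured space $X\setminus\{a\}$ has exactly the two components $[s,a)$ and $(a,t]$ -- and then read off monotonicity from how $f$ permutes these components. Both arguments ultimately rest on the same ingredient (connectedness of closed subintervals of a linearly ordered continuum; your appeal to density plus Dedekind completeness of the inherited order is the standard justification, and the subspace and order topologies do agree on closed intervals). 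The paper's proof is shorter and entirely local; yours is a bit longer but buys more: it shows that the endpoints and the order components are topologically determined, which is essentially a step toward the uniqueness-up-to-reversal of the compatible linear order (the result of Kok cited later in Corollary \ref{c:suff}), and your closing observation that $f\mapsto\pm1$ is a homomorphism onto a subgroup of $\Z_2$ with kernel $\H_+(X)$ makes the index claim cleaner than merely counting cosets. The only point worth making explicit is the degenerate case of a one- or two-point $X$ (where either the lemma is trivial or $X$ is not connected), and the extraction of the ``peak/valley'' triple that the paper itself glosses over does not arise in your version at all.
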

\begin{proof}
	Assume for contradiction that there exists $f\in \H(X)$ such that $f$ is neither order-preserving nor order-reversing. Thus there exist three points $x_1,x_2,x_3 \in X$ such that $x_1< x_2< x_3$ and  either $f(x_1) < f(x_2) \wedge f(x_2)> f(x_3)$ or $f(x_1) > f(x_2) \wedge f(x_2)< f(x_3)$.  Both cases lead to a contradiction. We give the details for the first case
	(the second case is similar).
	
	Suppose  $x_1 < x_2 < x_3$ and $f(x_1) < f(x_2) \wedge f(x_2) > f(x_3)$. Since $X$ is linearly ordered
	there are two possibilities to consider.
	\begin{enumerate}
		\item $f(x_1)< f(x_3)< f(x_2)$: then by the Intermediate Value Theorem (applied to the interval $[x_1,x_2]$) there exists $x_1 < x_0< x_2$ such that $f(x_0)=f(x_3)$, which is a contradiction since $f$ is $1-1$.
		\item $f(x_3)< f(x_1)< f(x_2)$: then again by the Intermediate Value Theorem (applied to the interval $[x_2,x_3]$) there exists $x_2< x_0< x_3$ such that $f(x_0) = f(x_1)$, which is a contradiction because $f$ is $1-1$.
	\end{enumerate}
	Each case leads to a contradiction, and this fact concludes the proof.
\end{proof}

In the sequel we use several times the following simple "localization lemma".
\begin{lemma} \label{r:ext}
Let $X$ be a LOTS and let $a < b$ be a given pair of elements in $X$.
	If $h \in \H_+[a,b]$, then for
	the natural extension $\hat{h}\co X \to X$, with $\hat{h}(x)=x$ for every $x \in X \setminus (a,b) =  (\leftarrow, a] \cup [b,\rightarrow)$, we have
	$\hat{h} \in \H_+(X)$.
\end{lemma}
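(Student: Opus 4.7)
The plan is to verify three properties of the extension: (a) $\hat h$ is a bijection of $X$; (b) $\hat h$ preserves the order; (c) $\hat h$ is a homeomorphism. The whole argument rests on the elementary observation that every $h\in \H_+[a,b]$ fixes the endpoints, i.e.\ $h(a)=a$ and $h(b)=b$, because an order-preserving bijection of a compact LOTS must send the minimum to the minimum and the maximum to the maximum.

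For (a), on the closed set $[a,b]$ the map $\hat h$ restricts to the bijection $h\co [a,b]\to [a,b]$, while on the complementary set $X\setminus (a,b)=(\leftarrow,a]\cup[b,\rightarrow)$ it restricts to the identity. The two partial definitions agree on the overlap $\{a,b\}$ since $h$ fixes both points, and together cover $X$, so $\hat h\co X\to X$ is well defined and is a bijection. For (b), I would take any $x<y$ in $X$ and split into cases: both points in $[a,b]$ (use $h\in \H_+[a,b]$); both in $X\setminus (a,b)$ (the identity preserves order); $x<a\leq y$ with $y\in [a,b]$, where $\hat h(x)=x<a\leq h(y)=\hat h(y)$ since $h(y)\in [a,b]$; and symmetrically for $x\in [a,b]$ with $y>b$. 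In each case $\hat h(x)<\hat h(y)$.

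For (c) I would invoke the pasting lemma. The sets $[a,b]$ and $(\leftarrow,a]\cup[b,\rightarrow)$ are both closed in $X$, their union is $X$, and their intersection is $\{a,b\}$. The restriction of $\hat h$ to $[a,b]$ is $h$, which is continuous as a self-homeomorphism (here the subspace and intrinsic order topologies on $[a,b]$ coincide because $[a,b]$ is convex in $X$); the restriction to the complementary closed set is the identity; and the two restrictions agree on $\{a,b\}$. Hence $\hat h$ is continuous on $X$. Running the same argument with $h^{-1}\in \H_+[a,b]$ shows that the extension $\widehat{h^{-1}}$ is continuous; but $\widehat{h^{-1}}=\hat h^{-1}$ by direct inspection, so $\hat h$ is a homeomorphism. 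Combined with (b), this gives $\hat h\in \H_+(X)$.

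No genuine obstacle is expected. The only items requiring a moment's thought are that every $h\in \H_+[a,b]$ fixes the endpoints and that the subspace topology on $[a,b]\subseteq X$ coincides with its intrinsic order topology; once these standard facts are in hand, everything else is bookkeeping.
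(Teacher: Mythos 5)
Your proof is correct. The paper states Lemma \ref{r:ext} as a ``simple localization lemma'' and omits the proof entirely, so there is nothing to compare against; your verification --- endpoint-fixing, case analysis for order-preservation, and the pasting lemma on the closed sets $[a,b]$ and $(\leftarrow,a]\cup[b,\rightarrow)$ together with the convexity fact that the subspace and order topologies on $[a,b]$ agree --- is exactly the standard argument the authors are implicitly relying on.
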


The idea of the following lemma was kindly provided to us by K.P. Hart.

\begin{lemma} \label{l:Hart}
	Let $X$ be a linearly ordered continuum. The following conditions are equivalent:
	\begin{enumerate}
		\item [(A)] for every pair of elements $a<b$ in $X$ the group $\H_+[a,b]$ is nontrivial,
		\item [(B)] for every pair of elements $a<b$ in $X$ the group $\H_+[a,b]$ is nonabelian.
	\end{enumerate}
\end{lemma}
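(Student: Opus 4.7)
The direction (B)$\Rightarrow$(A) is immediate, since a nonabelian group necessarily contains more than one element. The content is (A)$\Rightarrow$(B), and my plan is to fix $a<b$ and construct two non-commuting elements of $\H_+[a,b]$: a nontrivial $h$ that moves some point to the right, together with a locally supported element $\hat{g}$ whose support is translated off itself by $h$.

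Concretely, I would first apply (A) to the pair $a<b$ to obtain a nontrivial $h \in \H_+[a,b]$. Since $h$ fixes both endpoints (as an order-preserving self-homeomorphism of the closed interval $[a,b]$), there is some $x \in (a,b)$ with $h(x) \neq x$; after replacing $h$ by $h^{-1}$ if necessary, I may assume $h(x) > x$ and set $y := h(x)$. Injectivity of $h$ together with $h(b)=b$ forces $y < b$, so $x<y$ is a genuine pair in $X$, and a second application of (A) yields a nontrivial $g \in \H_+[x,y]$ together with a point $p \in (x,y)$ such that $g(p)\neq p$. I then extend $g$ to $\hat{g} \in \H_+[a,b]$ via Lemma~\ref{r:ext}, so that $\hat{g}$ is the identity off $(x,y)$.

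To finish, I evaluate the two compositions at $p$. Since $p\in(x,y)$, we have $(h\hat{g})(p) = h(g(p))$; on the other hand $p > x$ gives $h(p) > h(x) = y$, so $h(p) \notin (x,y)$ and $\hat{g}$ fixes it, yielding $(\hat{g} h)(p) = h(p)$. Were the compositions equal, injectivity of $h$ would force $g(p)=p$, contradicting the choice of $p$; hence $h$ and $\hat{g}$ do not commute, so $\H_+[a,b]$ is nonabelian. The only real bookkeeping---checking that $x<y$ strictly, so (A) can be reapplied, and that $h(p) \notin (x,y)$, so $\hat{g}$ fixes $h(p)$---both follow from the fact that order-preserving homeomorphisms of a closed interval fix its endpoints; I expect no further obstacle.
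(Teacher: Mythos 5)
Your proof is correct, and it is leaner than the paper's, although both rest on the same underlying idea: pair the given nontrivial $h$ with an element supported on an interval that $h$ translates away from itself. The paper's argument first uses continuity of $h_1$ to manufacture a whole closed interval $[x_1,x_2]$ with $x_2<h_1(x_1)$, applies (A) to the \emph{image} interval $[h_1(x_1),h_1(x_2)]$ to get $h_2$, then iterates the ``find a moved subinterval'' step once more for $h_2$, and finally compares the two compositions on the preimage interval $[h_1^{-1}(y_1),h_1^{-1}(y_2)]$. You avoid both the continuity/neighbourhood argument and the second iteration by applying (A) directly to $[x,h(x)]$ and evaluating the two compositions at a single point $p\in(x,h(x))$ moved by $g$: since $h(p)>h(x)=y$, the point $h(p)$ lies outside the support of $\hat g$, and injectivity of $h$ finishes the job. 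All the small verifications you flag are indeed immediate (an order-preserving self-homeomorphism of $[a,b]$ fixes the endpoints, so a nontrivial one moves an interior point, and $y=h(x)<h(b)=b$). Incidentally, the paper invokes connectedness of $X$ to guarantee that $(u,h_1(u))$ is nonempty, whereas in your version the nonemptiness of $(x,y)$ is automatic from the nontriviality of $\H_+[x,y]$; so your argument does not actually use that $X$ is a continuum, which makes it slightly more general as well as shorter.
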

\begin{proof} Let $a <b$ in $X$. Assuming (A)
	there exists a nontrivial $h_1 \in \H_+[a,b]$. 
	So, $h_1(u) \neq u$ for some $u \in (a,b)$.  We can suppose that $a < u< h_1(u) <b $
	(indeed, if $h_1(u)<u$, replace $h_1$ by $h_1^{-1}$ and $u$ by $h_1(u)$).
	Since $X$ is a continuum, the interval $(u,h_1(u))$ is nonempty. Choose an arbitrary $v \in (u,h_1(u))$.
	By the continuity of $h_1$ there exists a sufficiently small neighbourhood $O$ of $u$ such that
	$$s < v < h_1(t)$$
	for every $s,t \in O$. Without restriction of generality we can assume that $O$ is the interval $[x_1,x_2]$, where $x_1<x_2$.
	Clearly, $h_1(x_1) < h_1(x_2)$, so $$a<x_1< x_2< h_1(x_1) < h_1(x_2)<b.$$
	Now apply condition (A) to the interval $[h_1(x_1),h_1(x_2)]$. There exists  a nontrivial
	$h_2 \in \H_+[h_1(x_1),h_1(x_2)]$.
	Similarly, as for $h_1$ and $[a,b]$, one may choose, for $h_2$ and $[h_1(x_1),h_1(x_2)]$,
	a subinterval $[y_1,y_2]$ of $[h_1(x_1),h_1(x_2)]$ such that
	$$h_1(x_1) < y_1<y_2< h_2(y_1) < h_2(y_2)<h_1(x_2).$$
	We can treat $h_2$ as an element of $\H_+[a,b]$ by the natural extension
	(assuming that $h_2(x)=x$ outside of $[h_1(x_1),h_1(x_2)]$).
	
	The interval $[h_1^{-1}(y_1),h_1^{-1}(y_2)]$ is a nonempty subinterval of $[x_1,x_2]$.
	Now observe that for every $z \in [h_1^{-1}(y_1),h_1^{-1}(y_2)]$ we have 
	 $z < h_1(x)$. Therefore, $h_2(z)=z$. So, we get
	$$h_1(h_2(z))=h_1(z) \in [y_1,y_2],$$ while $$h_2(h_1(z)) \in [h_2(y_1) ,h_2(y_2)].$$
	Since $y_2 < h_2(y_1)$, we can
	conclude that $h_2 \circ h_1 \neq h_1 \circ h_2$ and $\H_+[a,b]$ is nonabelian.
\end{proof}

\begin{defi}(see, for example, \cite{Cech,Kok}) \label{d:cycord}
	A ternary relation $R \subseteq X^3$ on a set $X$ is said to be a \textit{cyclic ordering} if:
	\begin{enumerate}
		\item
		$
		\begin{cases}
		a\neq b \neq c \neq a\\
		(a,b,c) \notin R
		\end{cases}
		$
		$\Leftrightarrow (c,b,a) \in R$.
		\item $(a,b,c) \in R \Rightarrow (b,c,a) \in R$.
		\item $
		\begin{cases}
		(a,b,c) \in R \\
		(a,c,d) \in R
		\end{cases}
		$
		$\Rightarrow (a,b,d) \in R$.
	\end{enumerate}
\end{defi}

Let $X$ be a topological space and $R$ be a cyclic ordering on $X$. A
homeomorphism $f\co X \to X$ is  \emph{orientation preserving} 
if $f$ preserves $R$, meaning that
$(z,y,x) \in R$ implies $(f(z),f(y),f(x)) \in R$. The set of all such autohomeomorphisms is a 
subgroup of $\H(X)$ which we denote by $\H_+(X)$.

\section{Order-preserving homeomorphisms and \texorpdfstring{$a$}{}-minimality}

Using some results of Nachbin
we extend the ideas of Gartside and Glyn \cite{Gartside} to compact connected linearly ordered spaces (Theorem \ref{t:new}).

For the purposes of this section we fix the following notations.
Let $(X,\tau_{\leq})$ be a compact LOTS with its unique compatible uniform structure $\mu$ and denote $s=\min X, t= \max X$.
For every $f \in C(X)$ and $\eps >0$ define
$$
U_{f,\eps}:=\{(x,y) \in X \times X: \ |f(x)-f(y)| \leq \eps\}.
$$
Denote by $C_+(X,[0,1])$ the set of all continuous order-preserving maps $f\co X \to [0,1]$.
	
	\begin{lemma} \label{l:Nachbin} \emph{(Nachbin \cite{Nachbin})}
		Let $X$ be a compact LOTS.
		\begin{enumerate}
			\item $C_+(X,[0,1])$ separates the points of $X$.
			\item The family $\{U_{f,\eps}: f \in C_+(X,[0,1]), \eps >0\}$ is a subbase of the uniformity $\mu$ for every compact LOTS $X$.
		\end{enumerate}
	\end{lemma}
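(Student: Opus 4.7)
The plan is to reduce (2) to (1) using the uniqueness of the compatible uniformity on a compact Hausdorff space, and to prove (1) via the Urysohn--Nachbin separation lemma for normally ordered spaces.

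For (1), fix $a<b$ in $X$. The sets $A=[s,a]$ and $B=[b,t]$ are disjoint closed subsets of $X$ with $A$ decreasing (i.e.\ $x\le y\in A$ implies $x\in A$) and $B$ increasing. I would first observe that a compact LOTS is a \emph{normally ordered space} in the sense of Nachbin: given disjoint closed sets of this type, one separates them by disjoint open sets that are respectively decreasing and increasing, using a standard finite-cover argument on the subbasic rays $(\leftarrow,c)$ and $(d,\rightarrow)$. Once order-normality is in hand, the classical Urysohn construction can be carried out verbatim, but at each dyadic level the intermediate sets are chosen to be decreasing (or increasing) closed sets. The resulting function $f\co X\to[0,1]$ is automatically order-preserving, continuous, and satisfies $f|_A=0$, $f|_B=1$; in particular $f(a)\ne f(b)$.

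For (2), recall that a compact Hausdorff space admits a unique compatible uniformity, namely $\mu$. By (1) the family $\mathcal F:=C_+(X,[0,1])$ separates points of $X$, so the evaluation map
$$
e\co X\to [0,1]^{\mathcal F}, \qquad e(x)=(f(x))_{f\in\mathcal F},
$$
is a continuous injection from a compact space into a Hausdorff space, hence a topological embedding. The product uniformity on $[0,1]^{\mathcal F}$ has the standard subbase given by entourages depending on one coordinate and an $\eps>0$; pulling this subbase back along $e$ produces precisely $\{U_{f,\eps}:f\in\mathcal F,\eps>0\}$ as a subbase for a compatible uniformity on $X$. Uniqueness then forces this uniformity to coincide with $\mu$, which is the content of (2).

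The only substantive step is the first one: establishing order-normality of a compact LOTS and running the Nachbin variant of Urysohn's construction to obtain an order-preserving separating function. Everything after that is formal, relying on Stone--Weierstrass-type point separation together with the basic fact that the embedding $e$ transports the unique uniformity of the ambient cube back to the unique uniformity of $X$.
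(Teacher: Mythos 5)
Your proposal is correct and follows essentially the same route as the paper: for (2) your evaluation-embedding into $[0,1]^{\mathcal F}$ is exactly the paper's observation that a point-separating family of continuous functions on a compact space induces, as its weak uniformity, the unique compatible uniformity, with the $U_{f,\eps}$ as a subbase. For (1) the paper simply cites Nachbin, whereas you sketch the underlying argument (order-normality of a compact LOTS plus the order-preserving Urysohn construction); that sketch is the standard proof of the cited result and contains no gap.
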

	\begin{proof} (1) It is a fundamental result of Nachbin \cite[p. 48 and 113]{Nachbin}.
		
		(2)  Use (1) and the following observation. For every compact space $X$ and a point-separating family $F$ of (uniformly) continuous functions $X \to [0,1]$, the corresponding weak uniformity $\mu_F$ on $X$ is just the natural unique compatible uniformity $\mu$ on $X$.
		The family  of entourages $\{U_{f,\eps}: f \in F, \eps > 0\}$ is a uniform subbase of $\mu=\mu_F$.
	\end{proof}
	
\begin{defi} \label{d:connect}
	Let $\alpha \in \mu$ be an entourage. We say that a finite chain
	$A:=\{c_0, c_1, \cdots, c_n\}$ in $X$ is an \textit{$\alpha$-connected net} if :
	\begin{enumerate}
		\item $s=c_0 \leq c_1 \leq \cdots \leq c_n=t$;
		\item  $(x,y) \in \alpha$ for every $x,y \in [c_i,c_{i+1}]$ 
		and $0 \leq  i \leq n-1$.
	\end{enumerate}
	Notation: $A \in \Gamma(\alpha)$.
\end{defi}

Note that $(x,y) \in \alpha^2$ for every $x \in [c_k,c_{k+1}]$ and $y \in [c_{k+1}, c_{k+2}]$.

\begin{lemma} \label{l:connected}
	Let $(X,\tau_{\leq})$ be a compact LOTS with its unique compatible uniform structure $\mu$. The following are equivalent:
	\begin{enumerate}
		\item $X$ is connected;
		\item for every $\alpha \in \mu$ there exists an $\alpha$-connected net.
	\end{enumerate}
\end{lemma}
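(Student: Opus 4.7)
My plan is to prove the two implications separately. A key tool in both directions is the following standard uniform observation: for any entourage $\alpha \in \mu$, one can pick a symmetric $\beta \in \mu$ with $\beta \circ \beta \subseteq \alpha$, and then for each $x \in X$ the neighborhood $\beta[x]$ satisfies $\beta[x] \times \beta[x] \subseteq \alpha$; since the order topology on a LOTS has a base consisting of open intervals (together with half-intervals $[s,b)$ and $(a,t]$ at the endpoints), this neighborhood contains an interval $I$ around $x$ with $I \times I \subseteq \alpha$.

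For $(1) \Rightarrow (2)$, I fix $\alpha \in \mu$ and consider
\[
T := \{x \in X : \text{there is a partial } \alpha\text{-chain } s = c_0 \leq c_1 \leq \cdots \leq c_k = x\},
\]
where a \emph{partial $\alpha$-chain} is required to satisfy $[c_i,c_{i+1}]^2 \subseteq \alpha$ for $0 \leq i \leq k-1$. I would show that $T$ is nonempty (it contains $s$, via $k=0$), open, and closed; connectedness of $X$ then forces $T=X$, so $t \in T$, which is exactly an $\alpha$-connected net. Both openness and closedness rely on the interval $I$ around a given point $x$: for $y \in I$ with $y \geq x$ one appends $y$ to a partial chain ending at $x$, using $[x,y]^2 \subseteq I^2 \subseteq \alpha$; for $y \in I$ with $y<x$ one truncates the chain at the index $j$ with $c_j \leq y \leq c_{j+1}$ and replaces $c_{j+1}$ by $y$, using the crucial observation that $[c_j,y]^2 \subseteq [c_j,c_{j+1}]^2 \subseteq \alpha$.

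For $(2) \Rightarrow (1)$ I argue the contrapositive. A disconnected compact LOTS has a \emph{jump}, i.e., a pair $a<b$ with $(a,b)=\emptyset$: writing $X=A\sqcup B$ as a clopen partition with $s\in A$, take $a:=\sup\{x\in X : [s,x]\subseteq A\}$ (which exists by order-completeness of compact LOTS) and $b:=\inf\{x\in X : x>a\}$, and one verifies $a\in A$, $b\in B$, and $(a,b)=\emptyset$. By Lemma \ref{l:Nachbin}(1), pick $f\in C_+(X,[0,1])$ with $f(a)\neq f(b)$; monotonicity of $f$ yields $f(a)<f(b)$. Set $\eps:=(f(b)-f(a))/3$ and $\alpha:=U_{f,\eps}$. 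In any purported $\alpha$-connected net $s=c_0\leq\cdots\leq c_n=t$, the jump forces an index $i$ with $c_i\leq a<b\leq c_{i+1}$, so $a,b\in[c_i,c_{i+1}]$, and the small-square condition gives $|f(a)-f(b)|\leq\eps$, contradicting $|f(a)-f(b)|=3\eps$.

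I expect the main obstacle to be executing the extend/truncate manipulation cleanly at the global endpoints $s$ and $t$, where basic neighborhoods are half-intervals rather than open intervals, but this is a routine bookkeeping issue that does not affect the overall structure of the argument.
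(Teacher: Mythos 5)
Your proof is correct, but the forward implication takes a genuinely different route from the paper's. For $(1)\Rightarrow(2)$ the paper first observes that it suffices to treat entourages from a uniform subbase, invokes Lemma \ref{l:Nachbin}(2) to reduce to $\alpha=U_{f,\eps}$ with $f\in C_+(X,[0,1])$, and then writes down the net explicitly: $f(X)=[u,v]$ by connectedness, and the $c_i$ are chosen as preimages of an equipartition of $[u,v]$ with mesh $\leq\eps$, monotonicity of $f$ doing the rest. You instead handle an arbitrary $\alpha\in\mu$ directly by the open--closed argument on the set $T$ of points reachable by a monotone partial $\alpha$-chain from $s$; the append/truncate manipulations are sound (convexity of the basic neighbourhood $I$ gives $[x,y]^2\subseteq I^2\subseteq\alpha$, and truncation shows $T$ is in fact an initial segment), so connectedness forces $t\in T$. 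Your version is more self-contained --- it needs only the point-separation part of Nachbin's theorem, and only in the converse direction --- and it is essentially the general ``connected compact uniform spaces are chainable'' argument adapted to produce a monotone chain; the paper's version buys a shorter, fully explicit construction at the price of leaning on the subbase statement of Lemma \ref{l:Nachbin}. For $(2)\Rightarrow(1)$ the two arguments coincide in substance (a jump kills $\Gamma(\alpha)$ for a suitable $U_{f,\eps}$); the paper builds the separating monotone $f$ by hand as a two-valued step function, while you obtain it from Lemma \ref{l:Nachbin}(1), and your extraction of the jump from a clopen partition, though only sketched, does go through.
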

\begin{proof} (1) $\Rightarrow$ (2)
	
	In the setting of Definition \ref{d:connect} every finite chain which contains an $\alpha$-connected net is also an $\alpha$-connected net.
	It follows that it is enough to verify the definition for entourages from any given uniform subbase of $\mu$.
	So, in our case, by Lemma \ref{l:Nachbin}, it is enough to check that there exists an $\alpha$-connected net for every $\alpha = U_{f,\eps}$.
	We have to show that $\Gamma({U_{f,\eps}})$ is nonempty for every $f \in C_+(X,[0,1])$ and every $\eps >0$.
	
	Since $X$ is connected and compact the continuous image $f(X) \subseteq [0,1]$ is a closed subinterval, say $f(X)=[u,v]$.
	
	Fix $n \in \N$ large enough such that $\frac{v-u}{n} \leq \eps$. For every natural $i$ with $0<i<n$ choose $c_i \in X$ with $f(c_i)= \frac{(v-u)i}{n}+u$ and $c_0=s$, $c_n=t$.
	Then $$A:=\{c_0,c_1, \dots, c_n\} \in \Gamma(U_{f,\eps}).$$ 
	Indeed, since $f$ is order-preserving, for every $x,y \in X$ with $x,y \in [c_i,c_{i+1}]$ we have
	$$f(x), f(y) \in [f(c_i),f(c_{i+1})].$$
	So $|f(x)-f(y)| \leq \frac{v-u}{n} \leq \eps$. Therefore, $(x,y) \in \alpha=U_{f,\eps}$.

	(2) $\Rightarrow$ (1)
	
	Assume to the contrary that $X$ is not connected. Since $X$ is a compact LOTS it follows that the order is not dense. That is, there exist $a <b$ in $X$ such that the interval $(a,b)$ is empty. Then the function $f\co X \to [0,1]$, where $f(x)=0$ for $x \leq a$ and $f(x)=1$ for $b \leq x$ is continuous.
	Choose any $0<\eps <1$ and define
	$\alpha:= U_{f,\eps} \in \mu$. Then $\Gamma({\alpha})$ is empty.
\end{proof}

Assertion $(2)$ of the following theorem for $X:=[0,1]$ generalizes a result of \cite{Gartside} mentioned above in Theorem \ref{GGcor}.
	We modify the arguments of \cite{Gartside} and use
	Lemmas \ref{l:ZarMark}, \ref{l:Hart} and \ref{l:connected}.
	
\begin{thm} \label{t:new}
	Let $(X,\tau_{\leq})$ be a compact connected LOTS that satisfies the following condition:
	\begin{enumerate}[(A)]
\item
for every pair of elements $a<b$ in $X$ the group $\H_+[a,b]$ is nontrivial.
\end{enumerate}
	Then:
	\begin{enumerate}[(1)]
		\item For the topological groups $G=\H_+(X)$ and $G=\H(X)$ the Zariski and Markov topologies 
		coincide with the compact-open topology.
		That is, $\mathfrak{Z}_G=\mathfrak M_G=\copt$.
		\item The topological
		groups $\H_+(X)$ and $\H(X)$ are $a$-minimal.
		\item  $X$ is $aM_+$-compact and $aM$-compact.
	\end{enumerate}
\end{thm}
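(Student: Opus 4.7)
The plan is to establish the single inclusion $\copt\seq\mathfrak Z_G$ for both $G=\H_+(X)$ and $G=\H(X)$; by Lemma \ref{l:ZarMark}(2) this at once promotes to $\mathfrak Z_G=\mathfrak M_G=\copt$ and to the $a$-minimality of $G$, yielding (1) and (2), while (3) becomes immediate from Definition \ref{d:a}. Since both topologies are left-translation-invariant on the quasi-topological group $G$, it suffices to show that every basic $\copt$-neighborhood of the identity $e$ in $G$ contains a $\mathfrak Z_G$-neighborhood of $e$. A $\copt$-base at $e$ consists of the sets $W(\alpha)=\{f\in G\co (f(x),x)\in\alpha\ \forall x\in X\}$, where $\alpha$ ranges over the unique compatible uniformity $\mu$ on $X$.

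Given $\alpha\in\mu$, the construction is: choose a symmetric $\beta\in\mu$ with $\beta^3\seq\alpha$; use Lemma \ref{l:connected} to fix a $\beta$-connected net $s=d_0<d_1<\cdots<d_n=t$ with $n\geq 2$; and for each $i$, exploit condition (A) together with Lemma \ref{l:Hart} to obtain noncommuting $g_i,h_i\in\H_+[d_i,d_{i+1}]$, with the additional property (read off from the proof of Lemma \ref{l:Hart}) that their supports lie strictly inside the open interval $(d_i,d_{i+1})$. Extend $g_i,h_i$ to all of $X$ by the identity via Lemma \ref{r:ext}; they become elements of $\H_+(X)\seq\H(X)$. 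Form the Zariski sub-basic open set $V_i=\{f\in G\co [fg_if^{-1},h_i]\neq e\}$, which contains $e$ because $[g_i,h_i]\neq e$, so $\bigcap_{i=0}^{n-1}V_i$ is a $\mathfrak Z_G$-neighborhood of $e$.

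The key inclusion to verify is $\bigcap_{i=0}^{n-1}V_i\seq W(\alpha)$, argued via supports. For any $f\in G$, $\mathrm{supp}(fg_if^{-1})=f(\mathrm{supp}(g_i))\seq f((d_i,d_{i+1}))$ while $\mathrm{supp}(h_i)\seq(d_i,d_{i+1})$; since homeomorphisms with disjoint supports commute, $f\in V_i$ forces $f((d_i,d_{i+1}))\cap(d_i,d_{i+1})\neq\emptyset$. For order-preserving $f$ this reads $(f(d_i),f(d_{i+1}))\cap(d_i,d_{i+1})\neq\emptyset$; running this over all $i$ (with $f(d_0)=s$, $f(d_n)=t$ forced) yields $d_{i-1}<f(d_i)<d_{i+1}$ for every interior $i$. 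For any $x\in[d_i,d_{i+1}]$, both $x$ and $f(x)$ then lie in the union of at most three consecutive small intervals of the net, a set of uniform diameter at most $\beta^3\seq\alpha$; hence $(x,f(x))\in\alpha$ and $f\in W(\alpha)$. For $G=\H(X)$, an order-reversing $f\in\bigcap V_i$ would instead satisfy $f(d_i)>d_i$ and $f(d_i)<d_i$ simultaneously at each interior $i$, which is impossible once $n\geq 2$; hence $\bigcap V_i$ is contained in $W(\alpha)$ inside $\H(X)$ as well.

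The main obstacle I foresee is the localization step in the construction: producing $g_i,h_i$ with $[g_i,h_i]\neq e$ and supports strictly inside the \emph{open} interval $(d_i,d_{i+1})$, so that the support identities used in step three hold verbatim. The constructive proof of Lemma \ref{l:Hart} already furnishes witnesses of exactly this form (via the interior subinterval $[y_1,y_2]$ built there), but care is needed to extract them cleanly at every level of the net. The remainder of the proof is then bookkeeping with supports, uniform diameters, and the $\beta$-connected net.
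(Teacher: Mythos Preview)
Your proposal is correct and follows essentially the same route as the paper: reduce everything to $\copt\subseteq\mathfrak Z_G$ via Lemma \ref{l:ZarMark}; for each subinterval $(a,b)$ use condition (A) and Lemma \ref{l:Hart} to produce a noncommuting pair $p,q$ supported in $(a,b)$, giving the Zariski-open set $T(a,b)=\{g: [gpg^{-1},q]\neq e\}$; observe via the support argument that $g\in T(a,b)$ forces $g(a,b)\cap(a,b)\neq\emptyset$; then pick a $\beta$-connected net and show $\bigcap_i T(d_i,d_{i+1})\subseteq W(\alpha)$. The only cosmetic difference is in the final bookkeeping: the paper argues by contradiction using witness points $x_i\in(c_i,c_{i+1})$ with $h(x_i)\in(c_i,c_{i+1})$ (and uses $\eps_1^2\subseteq\eps$), whereas you extract the inequalities $d_{i-1}<f(d_i)<d_{i+1}$ directly at the net points and then bound $f(x)$ for all $x$ (using $\beta^3\subseteq\alpha$; in fact $\beta^2$ already suffices since $x$ lies in the middle of the three consecutive subintervals). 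Your worry about supports lying strictly inside the open intervals is unfounded: any $h\in\H_+[d_i,d_{i+1}]$ already fixes the endpoints, so its natural extension via Lemma \ref{r:ext} has support contained in $(d_i,d_{i+1})$.
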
 
\begin{proof} Assertion $(2)$  follows from $(1)$ by applying Lemma \ref{l:ZarMark}. By Definition \ref{d:a}
	 assertion $(3)$ is a reformulation of $(2)$. So it is enough to prove $(1)$.  	
	
Below $G$ denotes one of the groups $\H_+(X)$ or $\H(X)$.
Denote by $\tau_{co}$ the (compact-open) topology on $G$.
By Lemma \ref{l:ZarMark} it is equivalent to show that $\copt \seq \mathfrak Z_G$.

	For every interval $(a,b) \seq X$ (with $a<b$) the group $\H_+[a,b]$ is
	nontrivial (condition (A)) and thus, by Lemma \ref{l:Hart}, this group is nonabelian. Taking into account Lemmas \ref{l:Hart} and \ref{r:ext} choose $p, q \in \H_+(X)$ such that $pq \neq qp$ and $p(x)=q(x)=x$ for every $x \notin (a,b)$.
	Define
	\begin{equation} \label{T}
		T(a,b):= \{g \in G: gpg^{-1} \ \text{does not commute with} \ q\}.
	\end{equation}

\begin{claim}\label{TabOpen}	
	$e \in T(a,b) \in \mathfrak Z_G$. 
\end{claim}
\begin{proof}
	Indeed, rewrite the definition of $T(a,b)$ to obtain
		$$	T(a,b)=  \{g \in G: (gpg^{-1})q(gpg^{-1})^{-1}q^{-1} \neq e \}$$
		and use Definition \ref{d:ZarMar} to conclude that $T(a,b) \in \mathfrak Z_G$.
		The fact that $e \in T(a,b)$ is trivial by the choice of $p,q$.
	\end{proof}
	
\begin{claim} \label{Cl:T}
For every $g \in T (a,b)$ there exists $x \in (a, b)$ such that $g(x) \in (a, b).$ That is,
	$g(a,b) \cap (a,b) \neq \emptyset$ \ $\forall g \in T(a,b)$.
\end{claim}	
\begin{proof} 
	Assuming the contrary, there exists $g \in T(a,b)$ such that
	$g(a,b) \cap (a,b) = \emptyset$. Equivalently, $(a,b) \cap g^{-1}(a,b) = \emptyset$. Hence, $g^{-1}(x) \notin (a,b)$ for every $x \in (a,b)$.
	By the choice of $p$ we have $pg^{-1}(x)=g^{-1}(x)$ and so $gpg^{-1}(x)=x$ for every $x \in (a,b)$. On the other hand, $q(x)=x$
	for every $x \in X \setminus (a,b)$ (by the choice of $q$). It follows that  $gpg^{-1}$ and $q$ commute, which contradicts the definition of $T(a,b)$ in (\ref{T}).
\end{proof}

	Let $\alpha$ be the collection of all finite intersections of $T (a,b)$'s. 
	By Claim \ref{TabOpen} (using that $\mathfrak Z_G$ is a topology) 
	we obtain $\alpha \subseteq \mathfrak Z_G$. 
	Both $\copt$ and $\mathfrak Z_G$ are completely determined by the neighbourhood base at $e \in G$.
	 So, in order to see that $\copt \seq \mathfrak{Z}_G$ it suffices to show
	the following.

\begin{claim}\label{co_in_Zariski}
 Every open neighbourhood $U$ of $e$ in $G$, with the compact-open topology $\copt$, contains an element $T$ from $\alpha$. 
\end{claim}	
\begin{proof}
	Let $\mu$ be the unique compatible uniformity on $X$.
	A basic neighbourhood of $e$ has the form:
	$$
	O_{\eps}:=\{g \in G: (g(x),x) \in \eps \ \ \ \forall \ x \in X\},
	$$
	where $\eps \in \mu$. Choose a symmetric entourage $\eps_1 \in \mu$ such that $\eps_1^2 \subseteq \eps$.
	For $\eps_1$ by Lemma \ref{l:connected} choose an $\eps_1$-connected net
	$$c_0 < c_1 < \cdots < c_n$$
	of $X$. We can suppose that $X$ is nontrivial and $n>0$. 
	
	By Equation \ref{T}, we have the corresponding $T(c_i,c_{i+1}) \subseteq G$ for every index $0 \leq i \leq n-1$. Define
	$$
	T:= \bigcap_{i=0}^{n-1} T(c_i,c_{i+1}).
	$$ 
Now it is enough to show:
	
	\vskip 0.2cm

	\begin{equation} \label{claim4}
T \subseteq O_{\eps}.
	\end{equation}
	
	\vskip 0.3cm
	
	Assuming the contrary let $h \in T$ but $h \notin O_{\eps}$.
	Then there exists $x \in X$ such that $(h(x),x) \notin \eps$.
	Pick minimal index $k$ between $0$ and $n-1$ such that $x \in [c_k,c_{k+1}]$. Then by a remark after  Definition \ref{d:connect}
	 we have $(x,y) \in \eps_1^2 \subseteq \eps$ for every $y \in [c_{k-1}, c_{k+2}]$. If $k=0$, we replace $c_{k-1}$ by $c_0$. Similarly, we replace $c_{k+2}$ by $c_n$ if $k=n-1$.
	
	  Hence,
	\begin{equation} \label{g1}
	h(x) \in X \setminus [c_{k-1}, c_{k+2}]= [c_0,c_{k-1}) \cup (c_{k+2},c_n].
	\end{equation}
	Note that one of the intervals in the union can be empty.

	\vskip 0.2cm
	
	From Claim \ref{Cl:T} for every index $0 \leq i \leq n-1$ choose $x_i$ such that
	\begin{equation} \label{g4}
	x_i, h(x_i) \in (c_i, c_{i+1}).
	\end{equation}

	We show that there is no such $h \in G$.
	By Lemma \ref{l:2} any autohomeomorphism $h \in \H(X)$ is either order-preserving or
	order-reversing. By Equation \ref{g4} we have $h(x_i) < h(x_{i+1})$, where $x_i < x_{i+1}$.
	So, $h$ can be only order-preserving.
	
	Now, we show that $h$ is not order-preserving. Indeed,
	we have the following two cases:
	
	\begin{enumerate}
		\item [(1)] $h(x) \in (c_{k+2},c_n]$.

		Then, $h(x_{k+1}) < h(x)$, while $x < x_{k+1}$.

		\item [(2)] $h(x) \in [c_0,c_{k-1})$.

		Then, $h(x) < h(x_{k-1})$, while $x_{k-1} < x$. 	 	
	\end{enumerate}
	
	In both cases we get a contradiction.
	This completes the proof of Equation \ref{claim4} and hence of our theorem.
\end{proof}
\end{proof}

\begin{corol} \label{c:suff}
	Let $(X,\tau_{\leq})$ be a compact connected LOTS that satisfies the following condition:
	\begin{enumerate}[(C)]
		\item for every pair of elements $a<b$ in $X$ there exist $c,d \in X$ with $a\leq c <d \leq b$ such that
		$[c,d]$ is separable (equivalently, the subspace $[c,d] \subseteq X$ is homeomorphic to the real unit interval $[0,1]$).
	\end{enumerate} 
	Then $\mathfrak{Z}_G=\mathfrak M_G=\copt$ and the groups $G=\H_+(X)$, $G=\H(X)$ are $a$-minimal (that is, $X$ is $aM_+$-compact and $aM$-compact). 	 	
\end{corol}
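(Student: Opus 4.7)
The plan is short because Corollary \ref{c:suff} should follow almost immediately from Theorem \ref{t:new}, once we verify that condition (C) implies condition (A). So my entire strategy is to prove the implication (C) $\Rightarrow$ (A) and then invoke the theorem.

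To check (A), I would fix an arbitrary pair $a<b$ in $X$ and aim to produce a nontrivial element of $\H_+[a,b]$. By hypothesis (C), choose $c,d \in X$ with $a \leq c < d \leq b$ for which the subspace $[c,d] \seq X$ is separable, equivalently homeomorphic to $[0,1]$. Since $[c,d] \cong [0,1]$, the group $\H_+[c,d]$ is highly nontrivial (e.g., transporting any order-preserving homeomorphism of $[0,1]$ different from the identity through this homeomorphism); pick any such $h$. Then I would apply the localization lemma (Lemma \ref{r:ext}), with the ambient LOTS taken to be $[a,b]$ rather than $X$, to extend $h$ to a homeomorphism $\hat h \in \H_+[a,b]$ by setting $\hat h$ equal to the identity on $[a,b] \setminus (c,d)$. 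The extension is nontrivial, so $\H_+[a,b]$ is nontrivial, which is precisely condition (A).

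With condition (A) verified, Theorem \ref{t:new} applies directly to $X$ and yields all three conclusions of the corollary: $\mathfrak Z_G = \mathfrak M_G = \copt$ for $G \in \{\H_+(X), \H(X)\}$, the $a$-minimality of these two groups, and hence the $aM_+$- and $aM$-compactness of $X$.

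The only auxiliary point that requires a small justification is the parenthetical equivalence in condition (C): separability of $[c,d]$ is equivalent to $[c,d]$ being homeomorphic to $[0,1]$. Since $X$ is a compact connected LOTS, so is the closed subinterval $[c,d]$ (with the induced order and topology), and any nondegenerate compact connected separable LOTS is homeomorphic to $[0,1]$ by a classical characterization of the unit interval. This is the only ingredient beyond Theorem \ref{t:new} and Lemma \ref{r:ext}, and I do not expect any real obstacle; the whole corollary is essentially a packaging of a slightly stronger, more easily checkable hypothesis than (A).
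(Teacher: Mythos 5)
Your proposal is correct and follows essentially the same route as the paper: verify that (C) implies condition (A) of Theorem \ref{t:new} (the paper checks that $\H_+[c,d]$ is nonabelian while you check nontriviality of $\H_+[a,b]$ via the extension Lemma \ref{r:ext}, but these amount to the same reduction) and then invoke the theorem. The one point you gloss over, which the paper handles by citing Kok's result that up to inversion $[0,1]$ carries a unique compatible linear order, is why the homeomorphism $[c,d]\cong[0,1]$ transports order-preserving maps to maps that are order-preserving for the order inherited from $X$; alternatively, note that the transported homeomorphism fixes both endpoints of $[c,d]$ and apply Lemma \ref{l:2}.
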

\begin{proof}
	Recall (see, for example, \cite[Exercise 6.3.2]{Engel}) that a separable linearly ordered continuum is homeomorphic to $[0,1]$. 
	It is well known and easy to see that
	 $\H_+[0,1]$ is nonabelian (Section \ref{s:int}).
	Also, up to the inversion, there exists only one linear order on $[0,1]$ inducing the natural topology \cite[Cor. 4.1]{Kok}. We see that (C) implies that $\H_+[c,d]$ (being a copy of $\H_+[0,1]$) is nonabelian.
	So, we can apply Theorem \ref{t:new}.
\end{proof}

\section{Some examples} \label{s:ex}

\subsection{Not every compact LOTS is \texorpdfstring{$M_+$}{}-compact}

The following example shows that $\H_+(X)$ is not necessarily minimal.

\begin{example} \label{notM_+}
	Denote by $\mathbb{Z}^*$ the two-point compactification of $\mathbb{Z}$. One can easily verify that $\H_+(\mathbb{Z}^*)$ is a discrete copy of $\mathbb{Z}$ and thus not minimal. That is, the compact LOTS $\mathbb{Z}^*$ is not $M_+$-compact. 
	Note that $\mathbb{Z}^*$ is also not $M$-compact as it directly follows from \cite[Theorem 4.25]{MegDik}.
\end{example}

\vskip 0.3cm

\subsection{Rigid ordered compact spaces}
\label{s:rigid}

Let us say that a topological space $X$ is \emph{$\H$-rigid} if the group $\H(X)$ is trivial.
Similarly, let us say that a linearly ordered space $X$ is \emph{$\H_+$-rigid} if the group $\H_+(X)$ is trivial.
Certainly, if $X$ is $\H$-rigid then it is also $\H_+$-rigid.
There are many known examples of $\H$-rigid compact spaces, and in particular of compact ordered  $\H$-rigid spaces.
Most of the examples of the latter kind (Jonsson, Rieger, de Groot-Maurice) are zero-dimensional. 
It seems that the first ("naive") example of a nontrivial \emph{connected} compact ordered $\H$-rigid space was constructed by Hart and van Mill \cite{Hvan}. Note also that, under the \emph{diamond principle}, there exists an
$\H$-rigid Suslin continuum 
(Jensen, see in \cite[p. 268]{Tod}).

Using results of \cite{Hvan}, one may show the following.

\begin{prop} \label{p:HartMill}
There exists an ordered continuum $X$ with $\H_+(X) =\H(X) = \Z$, 
a discrete copy of the integers. 
\end{prop}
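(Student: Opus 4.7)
My plan is to build $X$ as a $\Z$-indexed chain of copies of an $\H$-rigid connected compact LOTS $Y$ (supplied by \cite{Hvan}) with endpoints $0_Y < 1_Y$, capped off by two points at infinity. Take disjoint copies $\{Y_n\}_{n\in \Z}$, identify $1_{Y_n}$ with $0_{Y_{n+1}}$ for each $n$, declare $Y_n$ to entirely precede $Y_{n+1}$, and adjoin new extrema $-\infty$ and $+\infty$. A routine check of Dedekind completeness and density of order shows that $X$ with its order topology is a compact connected LOTS. The shift $\sigma_k$ sending $Y_n$ canonically onto $Y_{n+k}$ and fixing $\pm\infty$ is a self-homeomorphism, so $\Z \hookrightarrow \H_+(X)$.

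The core step is the reverse containment $\H(X) \subseteq \{\sigma_k : k \in \Z\}$. By Lemma \ref{l:2} any $\varphi \in \H(X)$ is order-preserving or order-reversing; in the order-reversing case one would obtain, by composing with canonical identifications between the $Y_n$'s, a nontrivial self-homeomorphism of $Y$, contradicting $\H$-rigidity. So $\varphi \in \H_+(X)$ and in particular $\varphi$ fixes $\pm\infty$. Using the strengthened rigidity property of Hart--van Mill's $Y$ (namely, that the $Y_n$ are precisely the LOTS-subintervals of $X$ homeomorphic to $Y$), the image $\varphi(Y_0)$ must equal some $Y_k$, and inductively $\varphi(Y_n) = Y_{n+k}$ for all $n$. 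The restriction $\varphi|_{Y_n} \colon Y_n \to Y_{n+k}$ is then the canonical identification by rigidity of $Y$, so $\varphi = \sigma_k$ and $\H(X) = \H_+(X) \cong \Z$.

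To see this $\Z$ is discrete in the compact-open topology (which on $\H(X)$, for the compact $X$, coincides with the uniform topology of the unique compatible uniformity of $X$), it suffices to show that $\id$ is isolated. If $\sigma_{k_\alpha} \to \id$, then $\sigma_{k_\alpha}(0_{Y_0}) = 0_{Y_{k_\alpha}} \to 0_{Y_0}$ in $X$; but $0_{Y_0}$ is isolated in $\{0_{Y_n} : n \in \Z\}$, since the $X$-open interval $(0_{Y_{-1}}, 1_{Y_0})$ meets this set only at $0_{Y_0}$. Hence $k_\alpha = 0$ eventually. Taking $c := 0_{Y_0}$, $d := 1_{Y_0}$ yields the ``bonus'' observation (mentioned in the prose preceding Proposition \ref{p:HartMill}) that $[c,d]$ is a LOTS-copy of $Y$, so $\H[c,d] = \{\id\}$ is trivial.

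The main obstacle is extracting from \cite{Hvan} the structural rigidity fact used in the middle step: that no proper concatenation of copies of $Y$, nor any subinterval of $X$ straddling different $Y_n$'s, is homeomorphic as a LOTS to $Y$. This is strictly stronger than $\H(Y) = \{\id\}$ alone, but it follows from the non-homogeneity built into Hart--van Mill's construction. Granting this input, the remainder of the argument --- compactness and connectedness of $X$, fixity of $\pm\infty$ by any order-preserving map, and isolation of $\id$ --- is routine bookkeeping.
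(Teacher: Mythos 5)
Your construction of $X$ is the same as the paper's (the two-point compactification of the lexicographically ordered $\Z\times[a,b)$, i.e.\ a $\Z$-chain of copies of the Hart--van Mill continuum), and the outer structure of your argument --- embedding $\Z$ via the shift, reducing to order-preserving maps, and checking discreteness --- matches the paper's sketch. But the step you yourself flag as ``the main obstacle'' is a genuine gap, and it is exactly the step the paper spends its effort on. You assert that the $Y_n$ are \emph{precisely} the LOTS-subintervals of $X$ homeomorphic to $Y$, so that $\varphi(Y_0)$ must be some whole $Y_k$, and you justify this only by appeal to ``the non-homogeneity built into Hart--van Mill's construction.'' That property is not citable from \cite{Hvan} as stated (it concerns subintervals of your glued space $X$, not of $Y$ itself), and it does not follow from $\H$-rigidity of $Y$: rigidity rules out nontrivial self-homeomorphisms of $Y$ but says nothing a priori about an interval $[u,v]\subseteq X$ straddling a gluing point $0_{Y_{k+1}}$ being homeomorphic to $Y$.

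The paper closes this gap using a much stronger input from \cite{Hvan}: \emph{every} continuous selfmap of $L=[a,b]$ is a canonical retraction, together with the ``special property'' that a selfmap fixing $a$ is either locally the identity near $a$ or constantly $a$. Given $\varphi$ with $\varphi(L_0)$ not contained in any single $L_k$, one picks $k$ with $\varphi(0,a)<(k,a)<\varphi(0,b)$, composes with the canonical retraction of $X$ onto $[(k,a),\rightarrow)$, and obtains a nonconstant continuous selfmap of $L$ that is constantly $a$ on a neighbourhood of $a$ --- contradicting the special property. Something of this kind (an argument, not an appeal to non-homogeneity) is what your middle step needs; note that your order-reversing case has the same dependency, since it too presupposes that $\varphi(Y_0)$ is a whole block. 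The remaining parts of your write-up (compactness and connectedness of $X$, fixing of $\pm\infty$, discreteness via isolation of $\id$, and the bonus interval $[c,d]$ with trivial $\H[c,d]$) are correct and agree with the paper.
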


So, we get
a \emph{connected} compact LOTS $X$ such that $\H_+(X)$ is not minimal (or, $X$ is not $M_+$-compact). 
Hence, Theorem \ref{t:new} does not remain true for general ordered continua.

We sketch the proof of Proposition \ref{p:HartMill}.
Let $L:=[a,b]$ be the ordered continuum constructed in \cite[Section 5]{Hvan}.
This space has very few continuous selfmaps. Any continuous map  $f\co L \to L$ is a \emph{canonical retraction}.
That is, there exists a pair $u \leq v \in L$ such that $$f(x)=u \ \forall x \leq u, f(x)=x \ \forall u \leq x \leq v, \ f(x)=v \ \forall x \geq v.$$
In particular, $L$ is $\H$-rigid. Moreover, for every 
topological embedding $f\co L \to L$ we have $f=id$. 
Note also the following \textit{special property} which we use below:
 if $f(a)=a$ then either $f(x)=x$ for every $x \in U$
on some neighbourhood $U$ of $a$, or $f$ is the constant map $f(x)=a$ for every $x \in L$.

Now the desired continuum $X$ will be the two point compactification of some locally compact connected LOTS $Y$,  the "long L". More precisely, the corresponding linearly ordered set $Y$ is the lexicographically ordered
set $\Z \times [a,b)$. Endow $Y$ with its usual interval topology. Every subinterval in $Y$ of the form
$$L_n:= [(n,a),(n+1,a)]=  \{(n,x): \ x \in [a,b)] \} \cup \{(n+1,a)\}$$ is naturally order isomorphic with $L$ for every $n \in \Z$.
Our aim is to show that $\H_+(X)=\H(X)=\Z$.
 First of all we have a naturally defined (shift) homeomorphism $\sigma\co X \to X$ where $\sigma(n,x)=(n+1,x)$ for every $n \in \Z, x \in [a,b)$.  We claim that any other homeomorphism
$f\co X \to X$ is $\sigma^k$ (the $k$-th iteration) for some $k \in \Z$. Indeed, if $f(L_0) \subseteq L_k$ for some $k \in \Z$ then
$f(L_0) = L_k$. Moreover it is easy to see that $f=\sigma^k$. Now assume that $f(L_0) \subseteq L_k$ is not true for every $k \in \Z$.  Then there exists $k \in \N$ such that $f(0,a) < (k,a) < f(0,b)$.
Consider the retraction $$h\co X \to X, h(z)=(k,a) \ \forall z \leq (k,a), \ \text{and} \ h(z)=z \ \forall z > (k,a).$$
Then the composition $h \circ f$ restricted on $L_0$ defines a nonconstant continuous map $L_0 \to L_k$ which moves $(0,a)$ to $(k,a)$. 
This induces a continuous nonconstant selfmap $q\co L \to L$ such that $q(x)=a$ \ \ $\forall \ x \in U$ for some neighborhood $U$ of $a$. 
 By the \emph{special property} 
 of $L$ mentioned above, we get a contradiction.
These arguments show that algebraically $\H_+(X) = \H(X) = \Z$. 
Finally observe that $\H_+(X)$ is discrete in the compact-open topology.

 \subsection{The Ordinal Space}

 For every ordinal number $\kappa$ the space $[0,\kappa]$ is a compact LOTS. 
 This space is scattered and hence not connected for every $\kappa >0$.
 Nonetheless, one can show that $\H_+[0,\kappa]$ is trivial (hence $a$-minimal). We start by noting that   $[0,\kappa]$ is certainly a well-ordered set.

 \begin{lemma}\textnormal{\cite[Corollary $4.1.9$]{Cies}}\label{uniqueIso}
 	If two well-ordered sets $A$ and $B$ are order-isomorphic, then the isomorphism is unique.
 \end{lemma}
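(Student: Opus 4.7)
The plan is to prove the statement by a minimal-counterexample argument, exploiting the defining property of a well-ordered set: every nonempty subset has a least element. Let $f, g\co A \to B$ be two order-isomorphisms, and consider the disagreement set $S:=\{a \in A : f(a) \neq g(a)\}$. I want to derive a contradiction from the assumption $S \neq \emptyset$, which will prove $f=g$.

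Assume $S$ is nonempty and let $a_0 = \min S$. By minimality, $f(a')=g(a')$ for every $a' < a_0$. Without loss of generality assume $f(a_0) < g(a_0)$ (otherwise swap the roles of $f$ and $g$). Since $g$ is a bijection, there exists $a_1 \in A$ with $g(a_1)=f(a_0)$. Because $g$ is order-preserving and $g(a_1)=f(a_0)<g(a_0)$, we deduce $a_1<a_0$. But then $a_1 \notin S$, so $f(a_1)=g(a_1)=f(a_0)$, contradicting the injectivity of $f$ (as $a_1 \neq a_0$).

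An equivalent and perhaps more transparent formulation uses transfinite induction on $a \in A$: assuming $f(a')=g(a')$ for all $a'<a$, the order-isomorphism $f$ carries the initial segment $\{a'\in A\co a'<a\}$ onto the initial segment $\{b\in B\co b<f(a)\}$, and $g$ carries the same initial segment onto $\{b\in B\co b<g(a)\}$; by the inductive hypothesis these two images coincide, so $\{b<f(a)\}=\{b<g(a)\}$ in $B$, which forces $f(a)=g(a)$.

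There is no serious obstacle here; the statement is a classical consequence of the well-ordering principle, and the only care needed is to invoke the correct combination of order-preservation and bijectivity. The argument is valid for arbitrary (not necessarily set-sized) well-orders and does not require any structure on $A$ or $B$ beyond being well-ordered, which is exactly the setup of Lemma \ref{uniqueIso}.
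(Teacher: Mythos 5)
Your proof is correct: the least-disagreement-point argument is sound (the key step, that $g(a_1)<g(a_0)$ forces $a_1<a_0$, follows since an order-preserving bijection of linear orders reflects order), and the transfinite-induction reformulation via initial segments is equally valid. The paper itself gives no proof of this lemma --- it is quoted verbatim from Ciesielski's book --- so there is nothing to compare against; your argument is the standard one and fills the gap appropriately.
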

 It follows from Lemma \ref{uniqueIso} that the identity is the only order-preserving automorphism of a well-ordered set.
 \begin{corol}\label{minOrdinal}
 	Every well-ordered compact LOTS $X$
 	(e.g., the ordinal space $X=[0,\kappa]$) is $\H_+$-rigid. That is, $\H_+(X)=\{e\}$  (thus $X$ is $aM_+$-compact).
 \end{corol}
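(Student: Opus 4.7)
The plan is to reduce the statement directly to Lemma \ref{uniqueIso}. First I would observe that any $f \in \H_+(X)$ is, in particular, an order-preserving bijection of $X$ onto itself, i.e., an order-isomorphism of the well-ordered set $X$ with itself. Since the identity map $\id_X$ is clearly such an order-isomorphism, Lemma \ref{uniqueIso} (uniqueness of isomorphisms between well-ordered sets) forces $f = \id_X$. Hence $\H_+(X) = \{e\}$, which is precisely the $\H_+$-rigidity of $X$.

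To conclude that $X$ is $aM_+$-compact, I would note that the trivial group admits only one group topology at all, namely the (indiscrete = discrete) topology on a one-point set, which is automatically Hausdorff. Therefore $\H_+(X) = \{e\}$ is trivially $a$-minimal in the sense of Definition \ref{d:a}, and $X$ is $aM_+$-compact by the same definition. The particular case $X = [0,\kappa]$ is then immediate, since every ordinal space is a well-ordered compact LOTS. There is essentially no obstacle here beyond citing Lemma \ref{uniqueIso}; the only thing worth stressing is that continuity of $f$ plays no role at all — it is the bare order-theoretic structure of a well-ordered set that already rules out nontrivial order-preserving self-maps.
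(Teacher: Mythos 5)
Your proof is correct and follows exactly the paper's route: the paper likewise derives the corollary from Lemma \ref{uniqueIso} by noting that the identity is the only order-preserving automorphism of a well-ordered set, so that $\H_+(X)=\{e\}$ and $a$-minimality is automatic for the trivial group. Your added remark that continuity plays no role is consistent with the paper's argument.
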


  This example shows that the condition of Theorem \ref{t:new} is not necessary.

 \vskip 0.3cm

\subsection{The Unit Interval}
\label{s:int}

The group $\H_+[0,1]$ (and, hence, also any $\H_+[a,b]$ for every two reals $a <b$) is not abelian. Take, for example, the following pair $f,h$ of noncommuting elements.  Define $f(x)=x^2$, $h(x)=0.5x$ for $0 \leq x \leq 0.5$ and $h(x)=1.5x-0.5$ for $0.5 \leq x \leq 1$. 
 So, the continuum
$[0,1]$ clearly satisfies the conditions of Theorem  \ref{t:new}. 
Therefore, the groups $\H_+[0,1]$ and  $\H[0,1]$ are $a$-minimal.

\vskip 0.3cm

\subsection{The Ordered Square}
Let $I=[0,1]$ and define the lexicographic order on $I\times I$. Then $\mathcal I^2=\left( I\times I, \tau_{\leq}\right)$, the unit square with the order topology, is a compact and not metrizable space.
We show that it satisfies the conditions of 
Corollary \ref{c:suff}. It is connected (see \cite[Section $48$]{CEIT}). As to the second condition, let
 $K=[(a_1,b_1),(a_2,b_2)] \seq \mathcal I^2$ be a closed interval. If $a_1=a_2$ then $K$ is homeomorphic to $[0,1]\seq \mathbb R$. Otherwise, if $a_1<a_2$, $K$ contains an interval homeomorphic to $[0,1] \seq \mathbb R$
  (for example $[(\frac{a_1+a_2}{2},0),(\frac{a_1+a_2}{2},1)]$).
 Thus condition (C) of Corollary \ref{c:suff} is satisfied.
 Hence, $\H_+(\mathcal I^2)$ and $\H(\mathcal I^2)$ are $a$-minimal (and $\mathcal I^2$ is both $aM_+$-compact and $aM$-compact).

\subsection{The Extended Long Line}

\vskip 0.2cm

	Let $\mathcal{L}$ be the set $[0, \omega_1) \times [0,1)$ where  $\omega_1$ is the least uncountable ordinal.
	 Considering $\mathcal{L}$ with the lexicographic order, the set $\mathcal{L}$ with the topology induced
	by this order is called \textit{the long line}.
	Let $\mathcal{L}^* = \mathcal{L}\cup \{\omega_1 \}$ and extend the ordering on $\mathcal{L}$ to $\mathcal{L}^*$ by letting $a < \omega_1$ for all $a \in \mathcal{L}$. The space $\mathcal{L}^*$ with the order topology is a compact space called the \textit{extended long line}.
	In fact, $\mathcal{L}^*$ is the one point compactification of $\mathcal{L}$.

\vskip 0.2cm

 Several properties of this space can be found in \cite{Joshi}, \cite{Munkres} and \cite{Salzmann}. The extended long line satisfies the conditions of 
 Corollary \ref{c:suff}. Indeed, it is well known that $\mathcal L^*$ is a compact connected LOTS. Also, $\mathcal L$ (the long line) is locally homeomorphic (by an order-preserving homeomorphism) to the interval $(0,1)$. 
 In case the interval in question is of the form $[a,\omega_1]$, we can verify condition (C) for a subinterval $[a,b]$ of $[a,\omega_1]$, where $b \neq \omega_1$. 
So, $\H_+(\mathcal{L^*})$ and $\H(\mathcal{L^*})$ are $a$-minimal. Hence, $\mathcal{L^*}$ is both $aM_+$-compact and $aM$-compact.


\vskip 0.3cm

\subsection{The Circle}

Recall the definition of the natural cyclic ordering  (Definition \ref{d:cycord})
on the unit circle $\mathbb{S}^1$. Identify
$\mathbb{S}^1$, as a set, with $[0,1)$ and
define a ternary relation $R \subseteq [0,1)^3$ as follows:
$(z, y, x) \in R$ if and only if $(x-y)(y-z)(x-z) > 0$.
Denote by $\H_+(\mathbb{S}^1)$  the Polish group of all orientation preserving homeomorphisms of the circle $\mathbb{S}^1$.

The arguments of Theorem \ref{t:new} (or, of \cite[Theorem 1]{Gartside}) can be easily modified for the circle
$\mathbb{S}^1,$ hence:

\begin{thm}\label{H+S1}
	The group $\H_+(\mathbb{S}^1)$ is $a$-minimal.
\end{thm}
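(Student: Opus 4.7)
The plan is to mirror the proof of Theorem~\ref{t:new} in the cyclic-order setting, handling the subtleties arising from the absence of endpoints on $\mathbb{S}^1$. By Lemma~\ref{l:ZarMark} it suffices to show $\copt\seq\mathfrak Z_G$ for $G:=\H_+(\mathbb{S}^1)$; equivalently, every basic $\copt$-neighbourhood of the identity $e$ must contain a Zariski-open set about~$e$.

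I first build the analogues $T(a,b)$. For any proper open arc $(a,b)\subset\mathbb{S}^1$ (with respect to the fixed cyclic orientation), the closed arc $[a,b]$ is homeomorphic to $[0,1]$ as a LOTS; hence by Section~\ref{s:int} the subgroup $\H_+[a,b]$ of order-preserving homeomorphisms fixing the endpoints is nonabelian. Extending by the identity outside $(a,b)$, pick $p,q\in G$ supported in $(a,b)$ with $pq\neq qp$, and set
$$T(a,b):=\{g\in G:(gpg^{-1})q(gpg^{-1})^{-1}q^{-1}\neq e\}.$$
Exactly as in Claims~\ref{TabOpen} and~\ref{Cl:T}, $e\in T(a,b)\in\mathfrak Z_G$, and any $g\in T(a,b)$ satisfies $g((a,b))\cap(a,b)\neq\emptyset$: otherwise $gpg^{-1}$ is supported on the disjoint arc $g((a,b))$ and therefore commutes with $q$.

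Given a basic $\copt$-neighbourhood $U_\eps=\{g\in G:(g(x),x)\in\eps\ \forall x\in\mathbb{S}^1\}$ of $e$, choose a symmetric entourage $\eps_1$ with $\eps_1^2\seq\eps$ and, using compactness of $\mathbb{S}^1$ (analogous to Lemma~\ref{l:connected}), a cyclically ordered chain $c_0,c_1,\dots,c_{n-1},c_n=c_0$ with each closed arc $[c_i,c_{i+1}]$ being $\eps_1$-small. Put $T:=\bigcap_{i=0}^{n-1}T(c_i,c_{i+1})\in\mathfrak Z_G$; clearly $e\in T$. The crux is to prove $T\seq U_\eps$. For $h\in T$ pick, for each $i$, a point $x_i\in(c_i,c_{i+1})$ with $h(x_i)\in(c_i,c_{i+1})$, guaranteed by the claim above. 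Because $h$ preserves the cyclic order, $(x_{i-1},c_i,x_i)\in R$ forces $(h(x_{i-1}),h(c_i),h(x_i))\in R$; that is, $h(c_i)$ lies on the counterclockwise arc from $h(x_{i-1})\in(c_{i-1},c_i)$ to $h(x_i)\in(c_i,c_{i+1})$. Since these two image points lie in adjacent arcs of the chain, this counterclockwise arc is the ``short'' one, passing through $c_i$ and contained in $(c_{i-1},c_{i+1})$; hence $h(c_i)\in(c_{i-1},c_{i+1})$ for every $i$. By monotonicity $h([c_i,c_{i+1}])\seq[c_{i-1},c_{i+2}]$, so for each $x\in\mathbb{S}^1$ the pair $(x,h(x))$ can be joined via a single intermediate point from $\{c_i,c_{i+1}\}$ with each leg lying in one $\eps_1$-small arc; thus $(x,h(x))\in\eps_1^2\seq\eps$ and $h\in U_\eps$.

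The main obstacle relative to the linear case of Theorem~\ref{t:new} is the absence of endpoints, which on the line anchored the argument via the extremal indices $k=0$ and $k=n-1$. On the circle that role is played by the cyclic-rigidity input above: the simultaneous conditions $h(x_i)\in(c_i,c_{i+1})$ for \emph{all} $i$ force $h(x_0),\dots,h(x_{n-1})$ to visit the $n$ arcs of the chain in the same cyclic order as $x_0,\dots,x_{n-1}$, which rules out any nontrivial rotation-like shift of the chain and guarantees that the counterclockwise arc from $h(x_{i-1})$ to $h(x_i)$ is the short one through $c_i$. This is the essential new point in adapting the proof.
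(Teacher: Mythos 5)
Your proof is correct and follows exactly the route the paper indicates: the paper's own ``proof'' of Theorem~\ref{H+S1} is the one-line remark that the argument of Theorem~\ref{t:new} can be modified for the circle, and you carry out precisely that modification (the sets $T(a,b)$ for arcs, the $\eps_1$-small cyclic chain, and the cyclic-order rigidity step $h(c_i)\in(c_{i-1},c_{i+1})$ replacing the endpoint argument). The details you supply check out, modulo the harmless normalization that the chain should have at least three nodes so the arcs $(c_{i-1},c_{i+2})$ are proper.
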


Note that the coset space $\H_+(\mathbb{S}^1) / \St(z)$ is naturally homeomorphic to the circle, where $\St(z)$ is the stabilizer group of any given $z \in \mathbb{S}^1$. So the minimality of $\H_+(\mathbb{S}^1)$
can be derived from the minimality of $\H_+[0,1]$ using Lemma \ref{general_lemma} and the fact that $\St(z)$ is topologically isomorphic to $\H_+[0,1]$. 

Since $\H_+(\mathbb{S}^1)$ is a closed normal subgroup of $\H(\mathbb{S}^1)$, and $\H(\mathbb{S}^1)/\H_+(\mathbb{S}^1)\cong \mathbb Z_2$, we can use Lemma \ref{general_lemma} one more time to deduce the minimality of $\H(\mathbb{S}^1)$. 

A Hausdorff topological group is \textit{totally minimal} if every Hausdorff quotient is minimal \cite{DikTotalMin}. Every minimal algebraically (or, at least, topologically) simple minimal group is totally minimal. $\H_+(\mathbb{S}^1)$ is algebraically simple as can be seen (for example) in \cite{SU,Ghys}.
Although the group $\H_+[0,1]$ is not algebraically simple, it is topologically simple. Indeed, by \cite[Theorem 14]{FS}, $\H_+[0,1]$ has exactly five normal subgroups: $\{e\}$, $\H_+[0,1]$, $Q_1$, $Q_0$, $Q:=Q_0 \cap Q_1$. It is easy to see that
 $Q$ is dense in $\H_+[0,1]$. This yields that $\H_+[0,1]$ is topologically simple.

\begin{corol} 
	$\H_+(\mathbb{S}^1)$ and $\H_+[0,1]$ are totally minimal groups.
\end{corol}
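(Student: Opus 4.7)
The plan is to combine the minimality results already established with the simplicity of the two groups, via the principle (cited in the paper) that every minimal topologically simple group is totally minimal. This principle is immediate: if $G$ is topologically simple then the only closed normal subgroups are $\{e\}$ and $G$, so the only Hausdorff quotients of $G$ are $\{e\}$ and $G$ itself, both of which are minimal.

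For $\H_+(\mathbb{S}^1)$, the minimality is Theorem \ref{H+S1}. Algebraic simplicity of $\H_+(\mathbb{S}^1)$ is a classical fact (cited in the paper from \cite{SU,Ghys}), and algebraic simplicity trivially implies topological simplicity. Hence the general principle applies directly and gives total minimality.

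For $\H_+[0,1]$, the minimality follows from Theorem \ref{t:new} applied to $X=[0,1]$ (or from Corollary \ref{c:suff}). The group $\H_+[0,1]$ is not algebraically simple, but the paper records, via \cite[Theorem 14]{FS}, the complete list of its normal subgroups: $\{e\}$, $Q$, $Q_0$, $Q_1$, $\H_+[0,1]$, with $Q = Q_0\cap Q_1$ and $Q$ dense in $\H_+[0,1]$. To conclude topological simplicity I would argue that the closure of any nontrivial normal subgroup is again normal (since conjugation is a homeomorphism), and among the five possibilities the only closed candidates are $\{e\}$ and $\H_+[0,1]$ itself: the density of $Q$ forces $\overline{Q}=\H_+[0,1]$, and since $Q\subseteq Q_0$ and $Q\subseteq Q_1$ the closures of $Q_0$ and $Q_1$ also equal $\H_+[0,1]$. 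Hence $\H_+[0,1]$ is topologically simple, and the general principle again yields total minimality.

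The only non-routine point is the observation, already packaged in the excerpt, that $Q$ is dense; everything else is a bookkeeping consequence of results stated earlier. So no serious obstacle remains — the corollary is essentially an assembly of (i) the $a$-minimality theorems of the paper, (ii) the structural facts about normal subgroups of $\H_+[0,1]$ and $\H_+(\mathbb{S}^1)$, and (iii) the elementary implication ``minimal + topologically simple $\Rightarrow$ totally minimal''.
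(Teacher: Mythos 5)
Your proposal is correct and follows essentially the same route as the paper: both deduce total minimality from minimality plus topological simplicity, using algebraic simplicity of $\H_+(\mathbb{S}^1)$ and the Fine--Schweigert classification of the normal subgroups of $\H_+[0,1]$ together with the density of $Q$. Your write-up merely makes explicit the two small steps the paper leaves implicit (why ``minimal $+$ topologically simple'' gives total minimality, and why the density of $Q$ forces all proper normal subgroups to have dense closure), and both of these are argued correctly.
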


\section{Some questions}

A more general version of Question \ref{q:M} is the following.

\begin{question} \label{q:3} 	
	When appropriate subgroups $G$ of $\H(X)$ (say, the automorphism groups of some structures on $X$) are minimal (a-minimal) ?
\end{question}

We already know that the Cantor cube $2^{\omega}$ is $M$-compact (\cite{Gam}).

\begin{question} \label{q:2} \
	\begin{enumerate}
		\item Is the Cantor cube $2^{\omega}$ $aM$-compact ?
		\item Is the Cantor set $X \subseteq [0,1]$, as a linearly ordered compact LOTS, $M_+$-compact ?  $aM_+$-compact ?
		\item Is the space $2^{\lambda}$ $M$-compact (or, $aM$-compact) for every cardinal $\lambda$ ?
	\end{enumerate}
\end{question}

\begin{question} \label{q:1} \
\begin{enumerate}
	\item Is it true that every $M$-compact space is also $aM$-compact ?
	\item Is it true that every linearly ordered connected $M_+$-compact space is $aM_+$-compact ?
	\item Is it true that for ordered continua condition (A) of Theorem \ref{t:new} is really weaker than condition (C) of Corollary \ref{c:suff} ?
\end{enumerate}	
\end{question}

In view of Markov's Question \ref{q:Markov} and Theorem \ref{t:new}
we have several good reasons to pose the following question.

\begin{question} \label{q:Markov2}
	For what compact (linearly ordered) spaces $X$ the Markov and Zariski topologies coincide on the group $G=\H(X)$  (resp., $G=\H_+(X)$) ?
\end{question}

Various properties of the homeomorphism group $\H(X)$ of several important 1-dimensional continua $X$  were intensively studied from several points of view. Among others is the case where $X$ is the \emph{pseudo-arc} or the \emph{Lelek Fan}. About the latter case, see, for example, the very recent works of Barto\v{s}ova-Kwiatkowska \cite{BartKw,BartKw2}  and Ben Yaacov-Tsankov \cite{BeTs}.

\begin{question} \label{q:4} 	
	Let $X$ be the pseudo-arc or the Lelek fan. Is it true that $\H(X)$ is minimal ? $a$-minimal ?
\end{question}

It is well known that the pseudo-arc is a homogeneous compactum. So the previous question is related to Stoyanov's Question \ref{q:St}. Another property of the pseudo-arc is that it is a chainable continuum.
Recall that a compact space $X$ is \emph{chainable} if every (finite) open cover $\eps$ has a
finite open refinement $\alpha$ that is an  \emph{$\eps$-small chain}, that is,  $\alpha=\{O_1, \dots, O_n\}$, where
$O_i \cap O_j \neq \emptyset \Leftrightarrow |i-j| \leq 1$ and every $O_i$ is $\eps$-small.
Every linearly ordered continuum is chainable. This follows, for example, by Lemma \ref{l:connected}.
Therefore, it would be interesting to extend Theorem \ref{t:new} to some broader class of chainable continua.







\vskip 2cm

\end{document}